\documentclass[a4,12pt,reqno]{amsart}
\usepackage{amsmath,amsthm,amssymb}

\setlength{\oddsidemargin}{0cm}
\setlength{\evensidemargin}{0cm}
\setlength{\topmargin}{-0.5cm}
\setlength{\textheight}{22.5cm}
\setlength{\textwidth}{16cm}

\usepackage{color}

\theoremstyle{plain}
\newtheorem{thm}{Theorem}[section]
\newtheorem{lem}[thm]{Lemma}
\newtheorem{prop}[thm]{Proposition}

\newtheorem{ex}[thm]{Example}

  \makeatletter
  \@addtoreset{equation}{section}
  \makeatother

\title[Glaisher combinatorics]
{Glaisher combinatorics of regular partitions}
\author[Mizukawa and Yamada ]{Hiroshi Mizukawa
 and 
Hiro-Fumi Yamada}
\thanks{The first  author was supported by KAKENHI 24740033. The second author was supported by KAKENHI 24540020. }
\address{Hiroshi Mizukawa, Department of Mathematics,  National Defense
Academy of Japan,  Yokosuka 239-8686, Japan}
\email{mzh@nda.ac.jp}

\address{Hiro-Fumi Yamada, Department of Mathematics, Okayama University, Okayama 700-8530, 
Japan}
\email{yamada@math.okayama-u.ac.jp}
\date{}
\keywords{$r$-regular partition, $r$-class regular partition, Glaisher correspondence, 
Hall-Littlewood symmetric function, 
character table of symmetric group}
\subjclass[2010]{Primary: 05E10; Secondary:05E05  }

\begin{document}
\maketitle
\begin{abstract}
Extending the notion of $r$-(class) regular partitions,
we define $(r_{1},\ldots,r_{m})$-class regular partitions.  
A partition identity is presented and described by making use of the 
Glaisher correspondence.   
\end{abstract}
\section{Introduction}
\noindent
Partitions of natural numbers are ubiquitous in representation theory.
Typically, they label the ordinary irreducible representations of the symmetric groups.
Turning to modular representations of the symmetric groups, some restrictions to the partitions 
naturally arise. Namely, for a prime $r$, $r$-modular irreducible representations are labeled by 
the $r$-regular partitions.
On the other hand, the $r$-regular conjugacy classes correspond to the $r$-class regular 
partitions. 
As Euler noticed, $r$-regular partitions of $n$ are equinumerous to the $r$-class regular 
partitions of $n$. The natural combinatorial bijection between these two sets is called the Glaisher
correspondence. 
One of the authors studied in \cite{asy} the graded version of Glaisher correspondence 
and revealed an intimate role of the correspondence in modular representation theory.
We feel that the ``Glaisher combinatorics" should possibly be one of 
the keys in the investigation of  the symmetric groups.

When we look at the $r$-modular ($r \geq 3$, odd)
 representations of the covering of the symmetric group.
We need to handle the partitions which are $2$-class regular and $r$-class regular.
In this note, motivated by the above, we define 
$\underline{r}$-regular / $\underline{r}$-class regular 
partitions for a mutually coprime integral sequence $\underline{r}=(r_{1},\ldots,r_{m})$.
We give some partition identities and generating functions.

The paper is organized as follow.
In Section 2, we derive our main  formula (Theorem \ref{t1}) which is on multiplicities 
of parts in $\underline{r}$-class regular partitions.
Section 3 is devoted to a rephrase of the formula in terms of the Glaisher 
correspondence. Although this is an easy algorithm, we expect this gives a path to 
representation theory mentioned above.
In Section 4, the case $\underline{r}=r$ is discussed.
We consider the $r$-regular character table of the symmetric group
and provide a proof of  Olsson's determinant formula \cite{bo,bos,ol}.
We examine the transition matrices of the Hall-Littlewood symmetric functions
and the Schur functions.

\section{$\underline{r}$-class regular partitions}
Let $\underline{r}=(r_{1},r_{2},\ldots,r_{m})$ be a tuple of positive integers $\geq 2$.
Throughout the paper, we assume that any two integers $r_{i}$ and $r_{j}$
  $(i\not= j)$ in $\underline{r}$ are coprime.
 If an integer $n$ is not divisible by $r_{1},r_{2},\ldots,r_{m}$, then we write
 $n \not \equiv 0 \pmod {\underline{r}}$.
 A partition $\lambda$ said to be $\underline{r}$-class regular if any parts 
 of $\lambda$ are not divisible by $r_{1},r_{2},\ldots,r_{m}$.
 Let $CP_{\underline{r},n}$ be the set of the $\underline{r}$-class regular partitions  
 of $n$.
 We put 
  $$\pi_{k}(q)=\prod_{1\leq i_{1}<\ldots <i_{k} \leq m}(1-q^{r_{i_{1}}\cdots r_{i_{k}}}).$$
and
$$
\Phi_{\underline{r}}(q)=
\begin{cases}
\displaystyle{\prod_{n \geq 1}\frac{\pi_{1}(q^n)\pi_{3}(q^n)\cdots \pi_{m-1}(q^n)}
{\pi_{2}(q^n)\pi_{4}(q^n)\cdots \pi_{m}(q^n)}\frac{1}{1-q^n}},& m \equiv 0\pmod{2}\\
\displaystyle{\prod_{n \geq 1}\frac{\pi_{1}(q^n)\pi_{3}(q^n)\cdots \pi_{m}(q^n)}
{\pi_{2}(q^n)\pi_{4}(q^n)\cdots \pi_{m-1}(q^n)}\frac{1}{1-q^n}},& m \equiv 1\pmod{2}.
\end{cases}
$$
Then the inclusion-exclusion principle gives us 
$$\Phi_{\underline{r}}(q)=\prod_{n \not \equiv 0 \pmod{\underline{r}}}\frac{1}{1-q^n}=\sum_{n \leq 0} |CP_{\underline{r},n}|q^n.$$
We define, for $j \geq 1$,
$$V_{\underline{r},j,n}=\sum_{\rho \in CP_{\underline{r},n}}m_{j}(\rho)
\ \ {\rm and}\ \ 
W_{\underline{r},j,n}=\sum_{\rho \in CP_{r,n}}\big|\{i \mid m_{i}(\rho) \geq j\}\big|
,$$
where $m_{i}(\rho)$ means the multiplicity of $i$ in $\rho$.
\begin{thm}\label{t1}
If  $j\not\equiv 0 \pmod{\underline{r}}$, then we have
$$V_{\underline{r},j,n}=\sum_{k_{1},\ldots,k_{m}\geq 0}
W_{\underline{r},r_{1}^{{k_{1}}}{r_{2}^{k_{2}}}\cdots r_{m}^{k_{m}}j,n}.$$
\end{thm}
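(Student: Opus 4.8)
The plan is to reduce both $V_{\underline{r},j,n}$ and $W_{\underline{r},j,n}$ to sums of the counting numbers $c(N):=|CP_{\underline{r},N}|$ (with the conventions $c(0)=1$ and $c(N)=0$ for $N<0$), and then to recognize the asserted identity as a consequence of a unique factorization of positive integers adapted to the tuple $\underline{r}$. The elementary engine is the following bijection: for $i\not\equiv 0\pmod{\underline{r}}$ and $s\geq 1$, deleting $s$ parts equal to $i$ sets up a bijection between the class-regular partitions $\rho$ of $n$ with $m_i(\rho)\geq s$ and the class-regular partitions of $n-si$. Hence $\#\{\rho\in CP_{\underline{r},n}:m_i(\rho)\geq s\}=c(n-si)$, and this count is $0$ when $i\equiv 0\pmod{\underline{r}}$, since then $i$ cannot occur as a part.

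First I would rewrite the left-hand side. Writing the multiplicity as $m_j(\rho)=\sum_{t\geq 1}[m_j(\rho)\geq t]$ and exchanging the order of summation gives
$$V_{\underline{r},j,n}=\sum_{t\geq 1}\#\{\rho\in CP_{\underline{r},n}:m_j(\rho)\geq t\}=\sum_{t\geq 1}c(n-tj),$$
using $j\not\equiv 0\pmod{\underline{r}}$. Next I would treat the right-hand side. Exchanging the two summations in the definition of $W$ and applying the same bijection yields
$$W_{\underline{r},s,n}=\sum_{i\not\equiv 0\,(\underline{r})}\#\{\rho:m_i(\rho)\geq s\}=\sum_{i\not\equiv 0\,(\underline{r})}c(n-si).$$
Substituting $s=r_1^{k_1}\cdots r_m^{k_m}j$ and summing over $k_1,\ldots,k_m\geq 0$ turns the claimed right-hand side into
$$\sum_{k_1,\ldots,k_m\geq 0}\ \sum_{i\not\equiv 0\,(\underline{r})}c\big(n-(r_1^{k_1}\cdots r_m^{k_m}\,i)\,j\big).$$

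Comparing with the expression for $V_{\underline{r},j,n}$, it remains to prove that setting $t=r_1^{k_1}\cdots r_m^{k_m}\,i$ gives a bijection from the index set $\{(k_1,\ldots,k_m,i):k_l\geq 0,\ i\not\equiv 0\pmod{\underline{r}}\}$ onto the positive integers; once this is known, the argument $n-tj$ ranges over exactly the terms appearing in $V_{\underline{r},j,n}$ and the two sums agree term by term. This is precisely the main obstacle, and it is where the pairwise coprimality of the $r_l$ is essential — and where one must resist treating the $r_l$ as if they were primes. Concretely, I would prove: every positive integer $t$ admits a unique factorization $t=r_1^{k_1}\cdots r_m^{k_m}\,i$ with $k_l\geq 0$ and $i\not\equiv 0\pmod{\underline{r}}$. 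Existence follows by successively extracting, for each $l$, the largest power $k_l$ of $r_l$ dividing the current quotient; since every later quotient divides the previous one, the non-divisibility $r_l\nmid(\,\cdot\,)$ achieved at stage $l$ persists to the end, so the final factor $i$ is divisible by no $r_l$. For uniqueness, given such a factorization write $t=r_l^{k_l}M$ with $M=\prod_{l'\neq l}r_{l'}^{k_{l'}}\cdot i$; because $r_l$ is coprime to $\prod_{l'\neq l}r_{l'}^{k_{l'}}$ and does not divide $i$, it does not divide $M$, so $k_l$ must equal the exponent of the largest power of $r_l$ dividing $t$. This pins down every $k_l$, hence $i$, completing the lemma and with it the theorem.
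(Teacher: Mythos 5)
Your proof is correct, but it takes a genuinely different route from the paper's. The paper works entirely with generating functions: it derives $\sum_n V_{\underline{r},j,n}q^n=\Phi_{\underline{r}}(q)\,q^j/(1-q^j)$ and $\sum_n W_{\underline{r},j,n}q^n=\Phi_{\underline{r}}(q)\sum_{\ell\not\equiv 0\pmod{\underline{r}}}q^{j\ell}$, expands the latter by inclusion--exclusion over divisibility by products of the $r_i$, and then shows that summing over all $(k_1,\ldots,k_m)$ makes everything collapse to $\Phi_{\underline{r}}(q)\,q^j/(1-q^j)$ via the alternating binomial identity $\sum_a(-1)^a\binom{m'}{a}=0$. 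You instead extract coefficients from the start: both sides become sums of the counts $c(n-tj)$ via the delete-$s$-copies-of-part-$i$ bijection (which is exactly the combinatorial content of the paper's identity $\Phi_{\underline{r}}(q)(1-q^\ell)(q^{j\ell}+q^{(j+1)\ell}+\cdots)=\Phi_{\underline{r}}(q)q^{j\ell}$), and the collapse is then effected not by cancellation but by the bijection $(k_1,\ldots,k_m,i)\mapsto t=r_1^{k_1}\cdots r_m^{k_m}i$ between $\{(k_1,\ldots,k_m,i):k_l\geq 0,\ i\not\equiv 0\pmod{\underline{r}}\}$ and the positive integers. That unique-factorization lemma is never stated in the paper, though it is the bijective mechanism hiding behind the binomial cancellation, and your care in proving it for non-prime pairwise coprime $r_l$ (using $\gcd(r_l,A)=1\Rightarrow(r_l\mid AB\Leftrightarrow r_l\mid B)$) is exactly where the coprimality hypothesis does its work. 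What each approach buys: the paper's generating-function setup is reused verbatim in the subsequent theorems (the formulas for $c_{r_i,n}$ and the determinant computations), so it earns its keep there; your argument is more elementary and self-contained, avoids the somewhat terse cancellation step in the paper, and makes explicit the Glaisher-type decomposition $t=r_1^{k_1}\cdots r_m^{k_m}i$ that motivates Section 3. Both are complete proofs.
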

Before proving this theorem, we give an example. 
\begin{ex}
We take $\underline{r}=(2,3)$ and $n=10$. 
There are four $(2,3)$-class regular partitions of n = 10.
 The following table lists $V_{\underline{r},j,n}$ and $W_{\underline{r},j,n}$ of them:
$$\begin{array}{c||cccccccccc}
j&1&2&3&4&5&6&7&8&9&10\\
\hline
V_{\underline{r},j,n}&18&0&0&0&3&0&1&0&0&0\\
W_{\underline{r},j,n}&6&4&3&2&2&1&1&1&1&1
\end{array}.$$
We have
$$
\begin{cases}
V_{\underline{r},1,10}&=
W_{\underline{r},1,10}+W_{\underline{r},2,10}+W_{\underline{r},3,10}
+W_{\underline{r},4,10}+
W_{\underline{r},6,10}+W_{\underline{r},8,10}
+W_{\underline{r},9,10},\\
V_{\underline{r},5,10}&=
W_{\underline{r},5,10}+W_{\underline{r},10,10},\\
V_{\underline{r},7,10}&=
W_{\underline{r},7,10}.
\end{cases}$$
\end{ex}
\begin{proof}
Let $j \not \equiv 0 \pmod{\underline{r}}$. We have
$$\Phi_{\underline{r}}(q)\frac{1-q^j}{1-tq^{j}}=\sum_{n \geq 0}\left(\sum_{\rho \in CP_{\underline{r},n}}t^{m_{j}(\rho)}\right)q^n.$$
Taking the $t$-derivative at $t=1$, we obtain 
\begin{equation}\label{1}
\Phi_{\underline{r}}(q)\frac{q^j}{1-q^{j}}=\sum_{n \geq 0}V_{\underline{r},j,n}q^n.
\end{equation}
Let $\ell \not \equiv 0 \pmod{\underline{r}}$ and $j \geq 1$. We have
\begin{align*}
\sum_{n \geq 0}|\{\rho \in CP_{\underline{r},n}\mid m_{k}(\rho)\geq j\}|q^n
&=\Phi_{\underline{r}}(q)(1-q^\ell)(q^{j\ell}+q^{(j+1)\ell}+q^{(j+2)\ell}+\cdots)\\
&=\Phi_{\underline{r}}(q)q^{j\ell}
\end{align*}
We take sum over $\ell \not \equiv 0 \pmod{\underline{r}}$ and obtain the generating function of 
$W_{\underline{r},j,n}$:
\begin{align*} 
&\sum_{n \geq 0}W_{\underline{r},j,n}q^n
=\Phi_{\underline{r}}(q)
\sum_{\ell\not\equiv 0\!\!\!\!\!\pmod{\underline{r}}}q^{j\ell}\\
&=\Phi_{\underline{r}}(q)\sum_{\ell \geq 1}\left\{q^{j\ell}
-\left(\sum_{i_{1}=1}^m q^{r_{i_{1}}j\ell}\right)
+\left(\sum_{1\leq i_{1}< i_{2} \leq m}
 q^{r_{i_{1}}r_{i_{2}}j\ell}\right)
-\cdots+(-1)^mq^{r_{1}r_{2}\cdots r_{m}j\ell}
 \right\}\\
 &=\Phi_{\underline{r}}(q)
 \left\{
  \frac{q^j}{1-q^j}+
\sum_{k=1}^{m}\left((-1)^k\sum_{1\leq i_{1}< i_{2}<\ldots<i_{k} \leq m}
\frac{q^{r_{i_{1}}r_{i_{2}}\cdots r_{i_{k}}j}}{1-q^{r_{i_{1}}r_{i_{2}}\cdots r_{i_{k}}j}}\right)
 \right\}.
\end{align*}
We replace $j$ by $r_{1}^{k_{1}}r_{2}^{{k_{2}}}\cdots r_{m}^{{k_{m}}}j$ in the above
and consider the sum
\begin{align}\label{4}
\sum_{n \geq 0}
\left(
\sum_{k_{1},\ldots,k_{m}\geq 0}
W_{\underline{r},r_{1}^{{k_{1}}}{r_{2}^{k_{2}}}\cdots r_{m}^{k_{m}}j,n}
\right)q^n.
\end{align}
Now we assume that  just $m' \ (0 \leq m' \leq m)$ entries of $(k_{1},\ldots,k_{m})$ are not zero.
Then we see  that the coefficient of $\frac{q^{r_{1}^{k_{1}}\cdots r_{m}^{k_{m}}j}}{1-q^{r_{1}^{k_{1}}\cdots r_{m}^{k_{m}}j}}$ is  $\binom{m'}{a}$  in 
$$\sum_{k_{1},\ldots,k_{m}\geq 0}
\left(
\sum_{1\leq i_{1}<\cdots <i_{a} \leq m}
\frac{q^{r_{i_{1}}r_{i_{2}}\cdots r_{i_{a}}r_{1}^{k_{1}}r_{2}^{{k_{2}}}\cdots r_{m}^{{k_{m}}}j}}{1-q^{r_{i_{1}}r_{i_{2}}\cdots r_{i_{a}}r_{1}^{k_{1}}r_{2}^{{k_{2}}}\cdots r_{m}^{{k_{m}}}j}}
\right).$$
Since $\sum_{a \geq0} (-1)^a\binom{m'}{a}=0$, we have 
\begin{align}\label{2}
\sum_{n \geq 0}\left(
\sum_{k_{1},\ldots,k_{m}\geq 0}
W_{\underline{r},r_{1}^{{k_{1}}}{r_{2}^{k_{2}}}\cdots r_{m}^{k_{m}}j,n}
\right)q^n=\Phi_{\underline{r}}(q)\frac{q^j}{1-q^j}.
\end{align}
From (\ref{1}) and (\ref{2}) we obtain the formula 
$$V_{\underline{r},j,n}=\sum_{k_{1},\ldots,k_{m}\geq 0}
W_{\underline{r},r_{1}^{{k_{1}}}{r_{2}^{k_{2}}}\cdots r_{m}^{k_{m}}j,n}.$$
\end{proof}
We put 
$$\begin{cases}
a_{\underline{r},n}=\prod_{\rho \in CP_{\underline{r},n}}\prod_{i=1}^{\ell(\rho)}\rho_{i}, \\
b_{\underline{r},n}=\prod_{\rho \in CP_{\underline{r},n}}\prod_{i\geq 1}m_{i}(\rho)!. 
\end{cases}$$
Then the following theorem holds.
\begin{thm}\label{23}
We have
$b_{\underline{r},n}=\prod_{i=1}^{m}r_{i}^{c_{r_{i},n}}a_{\underline{r},n},$
where $c_{{r_{i}},n}$ is given by
$$c_{{r}_{i},n}=\sum_{j \not\equiv 0 \!\!\!\!\! \pmod{\underline{r}}}\sum_{k_{1},\ldots,k_{m}\geq 0}
k_{i}\ W_{\underline{r},r_{1}^{{k_{1}}}{r_{2}^{k_{2}}}\cdots r_{m}^{k_{m}}j,n}.$$
\end{thm}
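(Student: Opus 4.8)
The plan is to pass to logarithms and reduce the multiplicative identity to an additive one relating the quantities $V_{\underline r,j,n}$ and $W_{\underline r,j,n}$, which are already tied together by Theorem \ref{t1}. First I would record two elementary expansions. Since $\prod_{i=1}^{\ell(\rho)}\rho_i=\prod_{k\geq 1}k^{m_k(\rho)}$, interchanging the (finite) order of summation gives
$$
\log a_{\underline r,n}=\sum_{\ell\not\equiv 0\pmod{\underline r}}(\log \ell)\,V_{\underline r,\ell,n},
$$
where the restriction to $\ell\not\equiv 0$ is harmless because $V_{\underline r,\ell,n}=0$ for $\ell\equiv 0\pmod{\underline r}$. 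Likewise, writing $\log(m!)=\sum_{j\geq 1}[m\geq j]\log j$ and summing $\log(m_i(\rho)!)$ over all $\rho$ and all parts $i$ collects, for each threshold $j$, exactly the count $W_{\underline r,j,n}$, so that
$$
\log b_{\underline r,n}=\sum_{j\geq 1}(\log j)\,W_{\underline r,j,n}.
$$

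The key combinatorial input is a unique factorization statement: because $r_1,\ldots,r_m$ are pairwise coprime, their prime supports are disjoint, and hence every integer $j\geq 1$ can be written in exactly one way as $j=r_1^{k_1}\cdots r_m^{k_m}\,\ell$ with $k_1,\ldots,k_m\geq 0$ and $\ell\not\equiv 0\pmod{\underline r}$. I would prove this by taking, for each $t$, $k_t=\min_{p\mid r_t}\lfloor v_p(j)/v_p(r_t)\rfloor$, where $v_p$ is the $p$-adic valuation. Disjointness of supports makes the $k_t$ mutually independent; the minimum being attained at some prime forces $\ell=j/\prod_t r_t^{k_t}$ to fail divisibility by each $r_t$; and the same valuation computation, applied to an arbitrary admissible factorization, shows its exponents must coincide with these $k_t$, giving uniqueness.

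With this bijection I would substitute $j=r_1^{k_1}\cdots r_m^{k_m}\ell$ into the expansion of $\log b_{\underline r,n}$ and split the logarithm as $\log j=\log \ell+\sum_{t=1}^m k_t\log r_t$. The part carrying $\log\ell$ reassembles, via $\sum_{k_1,\ldots,k_m\geq 0}W_{\underline r,r_1^{k_1}\cdots r_m^{k_m}\ell,n}=V_{\underline r,\ell,n}$ from Theorem \ref{t1}, into $\sum_{\ell\not\equiv 0}(\log\ell)V_{\underline r,\ell,n}=\log a_{\underline r,n}$. The remaining terms group according to $t$, and the coefficient of $\log r_t$ is precisely
$$
\sum_{\ell\not\equiv 0\pmod{\underline r}}\ \sum_{k_1,\ldots,k_m\geq 0}k_t\,W_{\underline r,r_1^{k_1}\cdots r_m^{k_m}\ell,n}=c_{r_t,n}.
$$
This yields $\log b_{\underline r,n}=\log a_{\underline r,n}+\sum_{t=1}^m c_{r_t,n}\log r_t$, and exponentiating (all quantities being positive, with each $c_{r_t,n}\geq 0$) gives the asserted equality $b_{\underline r,n}=\prod_{t=1}^m r_t^{c_{r_t,n}}a_{\underline r,n}$. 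I expect the one genuine obstacle to be the unique factorization lemma: when the $r_t$ are not prime, existence and uniqueness of the exponents $k_t$ require the valuation argument above rather than being immediate, whereas every other step is a bookkeeping interchange of finite sums that is justified the moment this bijection is in place.
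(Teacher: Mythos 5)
Your proposal is correct and follows essentially the same route as the paper's proof: both express $a_{\underline r,n}=\prod_{j}j^{V_{\underline r,j,n}}$ and $b_{\underline r,n}=\prod_{j}j^{W_{\underline r,j,n}}$ (you in logarithmic form), reindex the latter product via the unique factorization $j=r_1^{k_1}\cdots r_m^{k_m}\ell$ with $\ell\not\equiv 0\pmod{\underline r}$, and then invoke Theorem \ref{t1}. The only difference is that you supply the valuation argument establishing existence and uniqueness of that factorization for non-prime pairwise coprime $r_t$, a point the paper uses without comment.
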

\begin{proof}
Since $V_{\underline{r},j,n}=0$ unless $j \not\equiv 0 \pmod{\underline{r}}$, we have
\begin{align*}
\prod_{\rho \in CP_{\underline{r},n}}\prod_{i \geq 1}\rho_{i}
=
\prod_{j \geq 1} j^{V_{\underline{r},j,n}}
=
\prod_{j \not\equiv 0\!\!\!\!  \pmod{\underline{r},}} j^{V_{\underline{r},,j,n}}.
\end{align*}
 We write $\underline{r}^{\underline{i}}=r_{1}^{i_1}\cdots r_{m}^{i_{m}}$.
On the other hand we compute
\begin{align*}
\prod_{\rho \in CP_{\underline{r},n}}\prod_{i \geq 1}m_{i}(\rho)!
&=
\prod_{j \geq 1} j^{W_{\underline{r},j,n}}
=
\prod_{j \not\equiv 0\!\!\!\!   \pmod{\underline{r}}} 
\prod_{i_1,\ldots,i_m \geq 0}(\underline{r}^{\underline{i}}j)^{W_{\underline{r},\underline{r}^{\underline{i}}j,n}}\\
&=
\left(
\prod_{j \not\equiv 0\!\!\!\!   \pmod{\underline{r}}} 
\prod_{i_1,\ldots,i_m \geq 0}(\underline{r}^{\underline{i}})^{W_{r,r^ij,n}}
\right)
\times
\left(
\prod_{j \not\equiv 0\!\!\!\!   \pmod{\underline{r}}} 
\prod_{i_1,\ldots,i_m \geq 0}j^{W_{r,r^ij,n}}
\right)\\
&=
\prod_{i=1}^{m}r_{i}^{c_{r_{i},n}}
\prod_{j \not\equiv 0\!\!\!\!   \pmod{r}}j^{\sum_{i_1,\ldots,i_m \geq 0}W_{r,\underline{r}^{\underline{i}}j,n}}\\
&\mathop{=}_{\rm Thm. \ref{t1}}
\prod_{i=1}^{m}r_{i}^{c_{r_{i},n}}
\prod_{j \not\equiv 0\!\!\!\!  \pmod{r}} j^{V_{\underline{r},j,n}}.
\end{align*}
%
%

\end{proof}

The generating functions of $c_{{r}_{i},n}$'s are given by the following theorem.
\begin{thm}
For $1\leq i \leq m$, we have
\begin{align*}
\sum_{n \geq 0} c_{r_{i},n}q^n
&=
\Phi_{\underline{r}}(q)\left(
\sum_{n \geq 0}
 \frac{q^{r_{i}n}}{1-q^{r_{i}n}}
 +
\sum_{k=1}^{m-1}(-1)^k
\sum_{\stackrel{1\leq l_{1}<\ldots< l_{k} \leq m}
{{l_{1},\ldots,l_{m}\not={i}}}}
\sum_{n\geq 0}\frac{q^{r_{i}r_{l_{1}}\cdots r_{l_{k}}n}}
{1-q^{r_{i}r_{l_{1}}\cdots r_{l_{k}}n}}
\right)\\
&=\Phi_{\underline{r}}(q)\sum_{n \not \equiv 0 \pmod{ \underline{r}^{(i)}}}\frac{q^{r_{i}n}}{1-q^{r_{i}n}},
\end{align*}
where $\underline{r}^{(i)}=(r_{1},\ldots,r_{i-1},r_{i+1},\ldots,r_{m})$.
\end{thm}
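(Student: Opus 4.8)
The plan is to convert the defining double sum for $c_{r_i,n}$ into a generating-function identity and to evaluate it by recycling the computation of $\sum_n W_{\underline{r},N,n}q^n$ already obtained in the proof of Theorem \ref{t1}. Writing $\underline{r}^{\underline{k}}=r_1^{k_1}\cdots r_m^{k_m}$ and summing the definition of $c_{r_i,n}$ against $q^n$, I would first record
$$\sum_{n\geq 0}c_{r_i,n}q^n=\Phi_{\underline{r}}(q)\,S_i,\qquad S_i:=\sum_{j\not\equiv 0\pmod{\underline r}}\ \sum_{k_1,\dots,k_m\geq 0}k_i\,K(\underline{r}^{\underline{k}}j),$$
where $K(N)=\sum_{S\subseteq\{1,\dots,m\}}(-1)^{|S|}\frac{q^{r_SN}}{1-q^{r_SN}}$ with $r_S=\prod_{l\in S}r_l$ is exactly the bracketed kernel that multiplies $\Phi_{\underline{r}}(q)$ in the expression for $\sum_n W_{\underline{r},N,n}q^n$ derived in the proof of Theorem \ref{t1}. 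Everything then reduces to proving $S_i=\sum_{n\not\equiv 0\pmod{\underline{r}^{(i)}}}\frac{q^{r_in}}{1-q^{r_in}}$.

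The heart of the argument is a change of summation variable that absorbs the subset $S$ into the exponent vector. For fixed $j$, the exponent inside $K(\underline{r}^{\underline{k}}j)$ is $j\prod_t r_t^{k_t+[t\in S]}$, so I would set $e_t=k_t+[t\in S]$ and collect, for each target vector $\underline{e}=(e_1,\dots,e_m)$, the coefficient of $\frac{q^{j\underline{r}^{\underline{e}}}}{1-q^{j\underline{r}^{\underline{e}}}}$. Since the choice ``$t\in S$ or $t\notin S$'' is independent across coordinates (with $t\in S$ forcing $e_t\geq 1$ and $k_t=e_t-1$, and $t\notin S$ giving $k_t=e_t$), and since both the sign $(-1)^{|S|}$ and the weight $k_i$ factor through the coordinates, this coefficient splits as $\bigl(\prod_{t\neq i}c_t(e_t)\bigr)\,d_i(e_i)$. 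A direct evaluation gives $c_t(0)=1$ and $c_t(e_t)=1-1=0$ for $e_t\geq 1$, while at the distinguished coordinate $d_i(0)=0$ and, for $e_i\geq 1$, $d_i(e_i)=(+1)e_i+(-1)(e_i-1)=1$.

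Thus only the vectors with $e_t=0$ for all $t\neq i$ and $e_i\geq 1$ survive, each with coefficient $1$, so that $S_i=\sum_{j\not\equiv 0\pmod{\underline r}}\sum_{e\geq 1}\frac{q^{jr_i^e}}{1-q^{jr_i^e}}$. I would finish by the substitution $n=j\,r_i^{e-1}$: since $r_i$ is coprime to every $r_t$ with $t\neq i$, the map $(j,e)\mapsto j\,r_i^{e-1}$ is a bijection from $\{\,j\not\equiv 0\pmod{\underline r}\,\}\times\{\,e\geq 1\,\}$ onto the set of $n$ not divisible by any $r_l$ with $l\neq i$, i.e.\ $n\not\equiv 0\pmod{\underline{r}^{(i)}}$, and under it $j\,r_i^{e}=r_i n$. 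This yields the second displayed form of the theorem. The first displayed form then follows from it by the ordinary inclusion--exclusion sieve over divisibility by the $r_l$ with $l\neq i$, exactly the mechanism already used to expand $\Phi_{\underline{r}}(q)$ in Section 2.

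I expect the main obstacle to be the weighted cancellation at the coordinate $i$. The unweighted coordinates cancel by the familiar $c_t(e_t\geq1)=0$ mechanism (the same $\sum_a(-1)^a\binom{m'}{a}=0$ phenomenon used in Theorem \ref{t1}), but the factor $k_i$ breaks that symmetry, and one must verify that the two surviving contributions $(+1)e_i$ and $(-1)(e_i-1)$ collapse to the constant $1$ rather than to something $e_i$-dependent. Handling the offset between $e_i$ and $e_i-1$ correctly, together with the boundary case $e_i=0$, is the only delicate point; the reindexing and the concluding inclusion--exclusion are routine.
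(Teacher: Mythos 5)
Your proposal is correct and follows essentially the same route as the paper: both start from the generating function of $W_{\underline{r},N,n}$ established in the proof of Theorem \ref{t1} and show that the weighted inclusion--exclusion sum collapses, your per-coordinate factorization $d_i(e_i)=e_i-(e_i-1)=1$ being exactly the paper's Abel-type telescoping in the $i$-th exponent, and $c_t(e_t\geq 1)=0$ being its binomial cancellation in the remaining coordinates. Your packaging (uniform factorization over coordinates plus the explicit reindexing $(j,e)\mapsto jr_i^{e-1}$) is if anything a cleaner write-up of the same argument.
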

\begin{proof}
Without loss of generality,
we can assume $i=1$.
We compute
\begin{align*}
&\sum_{n\geq 0}\left(\sum_{i_{1},\ldots,i_{m}\geq 0} i_{1}W_{\underline{r},\underline{r}^{\underline{i}}j,n}\right)q^n
=
\sum_{i_{1},\ldots,i_{m}\geq 0}i_{1}\left( \sum_{n\geq 0}W_{\underline{r},\underline{r}^{\underline{i}}j,n}q^n\right)\\
&=
\Phi_{\underline{r}}(q)\sum_{i_{2},\ldots,i_{m}\geq 0}\sum_{i_{1}\geq 0}
i_{1}
\left\{
\frac{q^{\underline{r}^{\underline{i}}j}}{1-q^{\underline{r}^{\underline{i}}j}}
+
\sum_{k=1}^{m}(-1)^k\sum_{1\leq l_{1}<\ldots< l_{k} \leq m}\frac{q^{r_{l_{1}}\ldots r_{l_{k}}\underline{r}^{\underline{i}}j}}{1-q^{r_{l_{1}}\ldots r_{l_{k}}\underline{r}^{\underline{i}}j}}
\right\}\\
&=\Phi_{\underline{r}}(q)
\sum_{i_{2},\ldots,i_{m}\geq 0}
\left\{
\sum_{i_{1}\geq 1}i_{1}\left(
\frac
{q^{\underline{r}^{\underline{i}}j}}
{1-q^{\underline{r}^{\underline{i}}j}}
-
\frac{q^{r_{1}\underline{r}^{\underline{i}}j}}{1-q^{r_{1}\underline{r}^{\underline{i}}j}}\right)\right.\\
& \left.+\sum_{k=1}(-1)^k
\sum_{i_{1}\geq 1}i_{1}
\sum_{2\leq l_{1}<\ldots< l_{k} \leq m}\left(
\frac
{q^{r_{l_{1}}\ldots r_{l_{k}}\underline{r}^{\underline{i}}j}}
{1-q^{r_{l_{1}}\ldots r_{l_{k}}\underline{r}^{\underline{i}}j}}
-
\frac{q^{r_{1}r_{l_{1}}\ldots r_{l_{k}}\underline{r}^{\underline{i}}j}}{1-q^{r_{1}r_{l_{1}}\ldots r_{l_{k}}\underline{r}^{\underline{i}}j}}\right)
\right\}\\
&=\Phi_{\underline{r}}(q)
\sum_{i_{2},\ldots,i_{m}\geq 0}
\sum_{i_{1}\geq 1}
\left\{
\frac{q^{\underline{r}^{\underline{i}}j}}{1-q^{\underline{r}^{\underline{i}}j}}
+
\sum_{k=1}^{m}(-1)^k
\sum_{2\leq l_{1}<\ldots< l_{k} \leq m}\frac{q^{r_{l_{1}}\ldots r_{l_{k}}\underline{r}^{\underline{i}}j}}{1-q^{r_{l_{1}}\ldots r_{l_{k}}\underline{r}^{\underline{i}}j}}
\right\}.
\end{align*}
Now we take sum over $j \not \equiv 0 \pmod{\underline{r}}$ to have the generating 
function of $c_{r_{1},n}$ as desired.
The second equality in the theorem follows from the inclusion-exclusion principle.
\end{proof}
\section{Glaisher Combinatorics}
Let $RP_{\underline{r},n}$ be the set of partitions whose parts are not divisible by $r_{i}$
for any $i=2,3,\ldots,m$ and 
the multiplicity of each part is less than $r_{1}$.
A partition $\lambda \in RP_{\underline{r},n}$ said to be an $\underline{r}$-regular.
We rewrite $\Phi_{\underline{r}}(q)$ as follows:
\begin{align*}
&\Phi_{\underline{r}}(q)
=\prod_{n \geq 1}\frac{1-q^{r_{1}n}}{1-q^n}
\prod_{i=2}^{m}\frac{(1-q^{r_i n})}{(1-q^{r_{1}r_{i}n})}
\prod_{i<j}\frac{(1-q^{r_{1}r_i r_{j}n})}{(1-q^{r_{i}r_{j}n})}\cdots\\
&=
\prod_{n \geq 1}\left(\sum_{k=0}^{r_{1}-1}q^{nk}\right)
\prod_{i=2}^{m}\frac{1}{
\sum_{k=0}^{r_{1}-1}q^{r_{i}nk}
}
\prod_{i<j}
\left(\sum_{k=0}^{r_{1}-1}q^{r_{i}r_{j}nk}\right)
\cdots\\
&=\sum_{n \geq 0} |RP_{\underline{r},n}|q^n
\end{align*}
Therefore we have $|CP_{\underline{r},n}|=|RP_{\underline{r},n}|$.
A concrete bijection will be described in this section.  
The following proposition is a direct consequence of the generating function $\Phi_{\underline{r}}(q)$.
\begin{prop}
For any permutation $\underline{s}=(s_{1},\ldots,s_{m})$ of
$\underline{r}=(r_{1},\ldots,r_{m})$, we have
$$|RP_{\underline{r},n}|=|RP_{\underline{s},n}|.$$
\end{prop}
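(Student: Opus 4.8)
The plan is to exploit the fact that the generating function $\Phi_{\underline{r}}(q)$ enumerating $RP_{\underline{r},n}$ is manifestly symmetric in the entries of $\underline{r}$, even though the conditions defining an $\underline{r}$-regular partition single out $r_{1}$ for special treatment (it governs the multiplicity bound, while $r_{2},\ldots,r_{m}$ govern divisibility of parts). Thus the equinumerosity will follow by comparing coefficients of a single, order-independent power series.

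First I would observe that the auxiliary polynomials $\pi_{k}(q)=\prod_{1\leq i_{1}<\cdots<i_{k}\leq m}(1-q^{r_{i_{1}}\cdots r_{i_{k}}})$ are invariant under any permutation of $(r_{1},\ldots,r_{m})$, since the product ranges over \emph{all} $k$-element subsets of indices. Consequently each of the two case-defining expressions for $\Phi_{\underline{r}}(q)$ is a symmetric function of $r_{1},\ldots,r_{m}$, so $\Phi_{\underline{s}}(q)=\Phi_{\underline{r}}(q)$ for every permutation $\underline{s}$ of $\underline{r}$. Equivalently, the product characterization $\Phi_{\underline{r}}(q)=\prod_{n \not\equiv 0 \pmod{\underline{r}}}(1-q^{n})^{-1}$ already exhibits the symmetry, since the condition $n\not\equiv 0\pmod{\underline{r}}$ treats all the $r_{i}$ on equal footing.

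Next I would invoke the identity established in this section, namely $\Phi_{\underline{r}}(q)=\sum_{n\geq 0}|RP_{\underline{r},n}|q^{n}$, obtained from the rewriting of $\Phi_{\underline{r}}(q)$ into a form whose coefficients count the $\underline{r}$-regular partitions. Comparing coefficients of $q^{n}$ gives $|RP_{\underline{r},n}|=[q^{n}]\,\Phi_{\underline{r}}(q)$, and the same holds verbatim with $\underline{r}$ replaced by $\underline{s}$. Combining this with the symmetry of $\Phi$ yields $|RP_{\underline{r},n}|=[q^{n}]\,\Phi_{\underline{r}}(q)=[q^{n}]\,\Phi_{\underline{s}}(q)=|RP_{\underline{s},n}|$, which is the assertion.

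There is essentially no obstacle at the level of cardinalities: once the symmetric generating function is in hand, the equality drops out immediately, which is why the statement is labelled a direct consequence of $\Phi_{\underline{r}}(q)$. The genuine content is that this identity is far from transparent combinatorially, since permuting $\underline{r}$ interchanges the prime controlling the multiplicity bound with those controlling divisibility, so the sets $RP_{\underline{r},n}$ and $RP_{\underline{s},n}$ look quite different as collections of partitions. The more delicate—and more illuminating—route would be to produce an explicit bijection realizing this equinumerosity, which is precisely the task of the concrete Glaisher-type correspondence promised in this section; the generating-function proof above should be regarded as the quick justification, with the bijection supplying the structural explanation.
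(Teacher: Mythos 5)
Your argument is correct and is exactly the paper's proof: the paper states the proposition as ``a direct consequence of the generating function $\Phi_{\underline{r}}(q)$,'' relying on the manifest symmetry of $\Phi_{\underline{r}}(q)$ in $r_{1},\ldots,r_{m}$ together with the identity $\Phi_{\underline{r}}(q)=\sum_{n\geq 0}|RP_{\underline{r},n}|q^{n}$ established just before the statement. Your additional remarks about the asymmetric roles of the $r_{i}$ in the definition of $RP_{\underline{r},n}$ and the desirability of a bijective proof match the paper's surrounding discussion.
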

For example, the number of 2-regular, 3-class regular partitions of $n$ is equal to the number of
3-regular, 2-class regular partitions of $n$.
This is also equal to the number of partitions of $n$ whose parts are of the form $6k\pm1\ (k \geq 0)$.

There is a natural bijection between the sets $RP_{\underline{r},n}$
and $CP_{\underline{r},n}$.
Take $\lambda \in RP_{\underline{r},n}$. If $\lambda$
 has a  multiple of $r_{1}$
 as a part, say $kr_{1}$, then replace $kr_{1}$ by
 $k^{r_{1}}$. 
 By this step the length of the partition increases by $r-1$.
 Repeat these steps until the partition has come to an element $g_{r_{1}}(\lambda)$ 
 of $CP_{\underline{r},n}$. 
 The map $g_{r_{1}}: RP_{\underline{r},n} \rightarrow
 CP_{\underline{r},n}$ is called the Glaisher correspondence, and shown to be bijective.
 
 The number of steps for obtaining $g_{r_{1}}(\lambda) \in CP_{\underline{r},n}$
 from $\lambda \in RP_{\underline{r},n}$ equals 
 $$\frac{\ell(g_{r_{1}}(\lambda))-\ell(\lambda)}{r-1}.$$
 Define for $\lambda \in CP_{\underline{r},n}$ and $j \not\equiv 0 \pmod{r_{1}}$,
 $y_{r^{k}j}=|\{i \geq 1 \mid m_{i}(\rho) \geq r^{k}j\},|$
 and 
 $$G_{j}(\lambda)=\sum_{k \geq 1}k y_{r_{1}^{k}j}(\lambda).$$
 For example, if $r_{1}=3$ and $\lambda=(1^9)$, then 
 $G_{1}(\lambda)=3, G_{2}(\lambda)=1$ and $G_{j}(\lambda)=0$ otherwise.
 Put $G(\lambda)=\sum_{j\not \equiv 0 \pmod{r_{1}}}G_{j}(\lambda)$.
 This is nothing but the times of Glaisher steps for $g^{-1}(\lambda) \mapsto \lambda$,
 and also we have the following.
 \begin{prop}
 $$c_{r_{1},n}=\sum_{\lambda \in CP_{\underline{r},n}}G(\lambda).$$
 \end{prop}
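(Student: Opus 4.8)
The plan is to reduce both sides of the identity to one common sum over the positive integers. Writing $v(N)$ for the largest integer $k\geq 0$ with $r_{1}^{k}\mid N$, I will show that each of $c_{r_{1},n}$ and $\sum_{\lambda\in CP_{\underline{r},n}}G(\lambda)$ equals $\sum_{N\geq 1}v(N)\,W_{\underline{r},N,n}$. The whole argument is then two reindexings built on a single number-theoretic lemma, so there is essentially nothing analytic to do.

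I would first treat the right-hand side. The one fact I need about the counting function $y_{N}(\lambda)=|\{i\geq 1\mid m_{i}(\lambda)\geq N\}|$ is that summing it over $\lambda\in CP_{\underline{r},n}$ returns $W_{\underline{r},N,n}$, which is immediate from the definition of $W$, since both quantities count, over all $\rho\in CP_{\underline{r},n}$, the part-values of $\rho$ whose multiplicity is at least $N$. Feeding this into $G(\lambda)=\sum_{j}\sum_{k\geq 1}k\,y_{r_{1}^{k}j}(\lambda)$, where $j$ runs over positive integers not divisible by $r_{1}$, and exchanging the order of summation yields $\sum_{\lambda}G(\lambda)=\sum_{j}\sum_{k\geq 1}k\,W_{\underline{r},r_{1}^{k}j,n}$. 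Setting $N=r_{1}^{k}j$, the conditions $k\geq 1$ and $r_{1}\nmid j$ hit each $N$ divisible by $r_{1}$ exactly once, with $k=v(N)$, so the sum collapses to $\sum_{N\geq 1}v(N)\,W_{\underline{r},N,n}$, the terms with $v(N)=0$ contributing nothing.

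For the left-hand side I would start from the formula for $c_{r_{1},n}$ in Theorem \ref{23} and carry out the analogous reindexing $N=r_{1}^{k_{1}}\cdots r_{m}^{k_{m}}j$ with $j\not\equiv 0\pmod{\underline{r}}$ and weight $k_{1}$. The key point is the lemma that, because $r_{1},\ldots,r_{m}$ are pairwise coprime, every positive integer $N$ has a unique such factorization, and that in it $k_{i}$ is forced to be the largest power of $r_{i}$ dividing $N$; in particular $k_{1}=v(N)$. Granting the lemma, the double sum defining $c_{r_{1},n}$ likewise becomes $\sum_{N\geq 1}v(N)\,W_{\underline{r},N,n}$, and comparing the two expressions proves the proposition.

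The only genuinely non-formal step is the uniqueness lemma, and the point to watch is that the $r_{i}$ need not be prime, so ``largest power of $r_{i}$'' is not an additive valuation. I would prove existence by taking each $k_{i}$ maximal: pairwise coprimality makes $r_{1}^{k_{1}},\ldots,r_{m}^{k_{m}}$ mutually coprime, hence their product divides $N$, and maximality forces the cofactor $j$ to be divisible by no $r_{i}$. For uniqueness, given any representation $N=r_{1}^{k_{1}}\cdots r_{m}^{k_{m}}j$ with $j$ divisible by no $r_{i}$, the factors $r_{l}^{k_{l}}$ with $l\neq i$ are coprime to $r_{i}$, so $r_{i}^{k_{i}+1}\mid N$ would force $r_{i}\mid j$, a contradiction; thus each $k_{i}$, and then $j$, is uniquely determined. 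This is exactly the coprimality bookkeeping already underlying the product formula for $\Phi_{\underline{r}}(q)$, so no new machinery is needed.
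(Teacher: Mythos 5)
Your proof is correct and is essentially the paper's own argument: the paper's entire proof is the remark that one ``just interchanges the order of summation,'' and your two reindexings (collapsing $\sum_{\lambda}G(\lambda)$ to $\sum_{j,k}k\,W_{\underline{r},r_{1}^{k}j,n}$ and matching it against the defining double sum for $c_{r_{1},n}$ via the unique factorization $N=r_{1}^{k_{1}}\cdots r_{m}^{k_{m}}j$) are exactly the details being elided. Your care about the $r_{i}$ not being prime --- using $\gcd(a,b)=1$, $a\mid bc\Rightarrow a\mid c$ rather than a valuation argument --- is a worthwhile addition, since that is the one point where a careless reader could go wrong.
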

 \begin{proof}
Proof is just by interchanging the order of the summation.
 \end{proof}

\section{case of $\underline{r}=r$ and regular character table}
%
Throughout this section, we fix a positive integer $r \geq 2$.
Here we restrict our attention to the case $m=1$, i.e., $\underline{r}=r$.
We will relate the analysis of $r$-(class) regular partitions
with the character tables of the symmetric groups.
 It should be remarked  that \cite{bos}
already proved Theorem \ref{main}. They give a bijective proof, and also sketch a proof using generating 
functions. Here we supply the proof relying on the generating functions for the sake of completeness.

For a partition $\lambda=(\lambda_{1},\lambda_{2},\ldots)$ and $j \in \{1,2,\ldots,r-1\}$, we put
$$
x_{r,j}(\lambda)
=\big|\{i \mid \lambda_{i} 
\equiv j \pmod{r}\}\big|
\ {\rm and }\ \  
y_{r,j}(\lambda)=\big|\{i \mid m_{i}(\lambda) \geq j\}\big|.$$ 
We define
$$X_{r,j,n}=\sum_{\rho \in CP_{r,n}}x_{r,j}(\rho)\ {\rm and} \ \  Y_{r,j,n}=\sum_{\lambda \in RP_{r,n}}y_{r,j}(\lambda).$$
\begin{thm}\label{main}
$
X_{r,j,n}
-
Y_{r,j,n}=c_{r,n}$
for $j=1,2,\ldots,r-1$.
\end{thm}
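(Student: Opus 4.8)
The plan is to prove the identity at the level of generating functions in $q$, using the product formulas already obtained for $V$, $W$ and $c_{r,n}$. Throughout we specialise $\underline{r}=r$ (so $m=1$) and recall that $\Phi_{\underline{r}}(q)=\prod_{n\geq 1}\frac{1-q^{rn}}{1-q^n}$ is simultaneously the generating function for $|CP_{r,n}|$ and for $|RP_{r,n}|$. The structure will be: compute $\sum_n X_{r,j,n}q^n$, compute $\sum_n Y_{r,j,n}q^n$, subtract, and recognise the result as the generating function of $c_{r,n}$.

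First I would handle $X_{r,j,n}$. Since $1\le j\le r-1$, the parts of $\rho\in CP_{r,n}$ that are $\equiv j \pmod r$ are exactly those equal to $j,j+r,j+2r,\dots$, so summing multiplicities gives $X_{r,j,n}=\sum_{k\geq 0}V_{r,\,j+kr,\,n}$. Substituting the generating function (\ref{1}) for each $V$ yields
\[
\sum_{n\geq 0}X_{r,j,n}q^n=\Phi_{\underline{r}}(q)\sum_{k\geq 0}\frac{q^{j+kr}}{1-q^{j+kr}}.
\]
Next I would compute $Y_{r,j,n}$ by organising the count according to which value $v$ carries multiplicity at least $j$, so that $Y_{r,j,n}=\sum_{v\geq 1}\bigl|\{\lambda\in RP_{r,n}:m_v(\lambda)\geq j\}\bigr|$. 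For a fixed $v$ the value $v$ must occur with multiplicity in $\{j,\dots,r-1\}$ while all other values are unconstrained; the corresponding generating function therefore factors as $\bigl(\sum_{a=j}^{r-1}q^{av}\bigr)\,\frac{1-q^{v}}{1-q^{rv}}\,\Phi_{\underline{r}}(q)$, and summing the finite geometric series $\sum_{a=j}^{r-1}q^{av}=\frac{q^{jv}-q^{rv}}{1-q^{v}}$ collapses this to $\frac{q^{jv}-q^{rv}}{1-q^{rv}}\Phi_{\underline{r}}(q)$. Summing over $v$ gives
\[
\sum_{n\geq 0}Y_{r,j,n}q^n=\Phi_{\underline{r}}(q)\left(\sum_{v\geq 1}\frac{q^{jv}}{1-q^{rv}}-\sum_{v\geq 1}\frac{q^{rv}}{1-q^{rv}}\right).
\]

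Subtracting the two, the theorem reduces to a single power-series identity,
\[
\sum_{k\geq 0}\frac{q^{j+kr}}{1-q^{j+kr}}=\sum_{v\geq 1}\frac{q^{jv}}{1-q^{rv}},
\]
after which the surviving term $\Phi_{\underline{r}}(q)\sum_{v\geq 1}\frac{q^{rv}}{1-q^{rv}}$ is precisely the generating function of $c_{r,n}$ established just before this section (equivalently, by Abel summation applied to the defining formula $c_{r,n}=\sum_{j\not\equiv 0}\sum_{k\geq 0}kW_{r,r^kj,n}$ together with Theorem \ref{t1}). The key identity itself I would verify by expanding both sides into the common double series $\sum_{p\geq 1}\sum_{k\geq 0}q^{p(j+kr)}$: the left-hand side produces this via the geometric expansion of each summand, and the right-hand side produces the same after writing $\frac{q^{jv}}{1-q^{rv}}=\sum_{s\geq 0}q^{v(j+rs)}$ and matching the index pair $(v,s)$ with $(p,k)$.

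The one point requiring genuine care — and the main, if modest, obstacle — is the $Y$-computation: one must correctly build in the $r$-regularity constraint $m_v\leq r-1$, since it is exactly the \emph{upper} truncation of the geometric series that yields the numerator $q^{jv}-q^{rv}$ and hence the ``extra'' term $\sum_{v\geq 1}\frac{q^{rv}}{1-q^{rv}}$ matching $c_{r,n}$; dropping the truncation would make $X$ and $Y$ agree term by term and lose the correction. Everything else is bookkeeping. In particular, the independence of the answer from the choice of $j\in\{1,\dots,r-1\}$ is automatic once the key identity is applied, since all $j$-dependent contributions cancel and only the $j$-free term $\Phi_{\underline{r}}(q)\sum_{v\geq 1}\frac{q^{rv}}{1-q^{rv}}$ remains.
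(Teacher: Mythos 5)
Your proposal is correct and follows essentially the same route as the paper: both compute $\sum_n X_{r,j,n}q^n=\Phi_r(q)\sum_{k\geq 0}\frac{q^{j+kr}}{1-q^{j+kr}}$ and $\sum_n Y_{r,j,n}q^n=\Phi_r(q)\sum_{v\geq 1}\frac{q^{jv}-q^{rv}}{1-q^{rv}}$, cancel the $j$-dependent parts by the same double-series rearrangement, and identify the remainder $\Phi_r(q)\sum_{v\geq 1}\frac{q^{rv}}{1-q^{rv}}$ with the generating function of $c_{r,n}$. The only cosmetic difference is that you derive the $Y$-series by directly modifying one factor of the product $\Phi_r(q)$, whereas the paper introduces the auxiliary deformation $\Phi_{r,j}(q,t)$ and takes a $t$-derivative at $t=1$; these are the same computation.
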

Before proving this theorem, we give an example. 
\begin{ex}
We take $r=3$ and $n=7$. 
The following table lists the $3$-class regular partitions of $n=7$:
$$\begin{array}{c||ccccccccc|c}
\rho&7&52&51^2&421&41^3&2^31&2^21^3&21^5&1^7&{\rm total}\\
\hline
x_{3,1}(\rho)&1&0&2&2&4&1&3&5&7&25\\
x_{3,2}(\rho)&0&2&1&1&0&3&2&1&0&10
\end{array}$$
From the table above, we have $X_{3,1,7}=25$ and $X_{3,2,7}=10$.
 As for the $3$-regular partitions of $7$, we have
$$\begin{array}{c||ccccccccc|c}
\lambda&7&61&52&51^2&52&421&3^21&32^2&321^2&{\rm total}\\
\hline
\prod_{i \geq 1} m_{i}(\lambda)!&1&1\!\cdot\!1&1\!\cdot\!1&1\!\cdot\!21&1\!\cdot\!1&1\!\cdot\!1\!\cdot\!1&21\!\cdot\!1&1\!\cdot\!21&1\!\cdot\!1\!\cdot\!21&-\\
y_{3,1}(\lambda)&1&2&2&2&2&3&2&2&3&19\\
y_{3,2}(\lambda)&0&0&0&1&0&0&1&1&1&4
\end{array}$$
From the second table, we have $Y_{3,1,7}=19$ and $Y_{3,2,7}=4$.
Thus we see 
$$X_{3,1,7}-Y_{3,1,7}=X_{3,2,7}-Y_{3,2,7}=6.$$
On the other hand  we have
$$\Phi_3(q)  \sum_{k\geq 1}\frac{q^{3k}}{1-q^{3k}}=q^3 + q^4 + 2 q^5 + 4 q^6 + 6 q^7 + 9 q^8 + 13 q^9 + 19 q^{10}+\cdots.$$
\end{ex}
\begin{proof}
First we will compute the generating function of $X_{r,j,n}$.
For $i \not \equiv 0 \pmod{r}$, we have
$$\Phi_{r}(q)\frac{1-q^i}{1-tq^i}=\sum_{n\geq 0}\left(t^{\sum_{\rho \in CP_{r,n}}m_{i}(\rho) }\right)q^n.$$
Taking the $t$-derivative at $t=1$, we obtain
\begin{align}\label{V}
\Phi_{r}(q)\frac{q^i}{1-q^i}=\sum_{n\geq 0} \left(\sum_{\rho \in CP_{r,n}}m_{i}(\rho) \right)q^n.
\end{align}
Since $x_{r,j}(\rho)=\sum_{k \geq 0}m_{kr+j}(\rho)$,
we have the following generating function 
 of $X_{r,j,n}$.
$$
\sum_{n\geq 0}X_{r,j,n}q^n=\Phi_{r}(q)\sum_{k \geq 0}\frac{q^{rk+j}}{1-q^{rk+j}}.$$
Second, we consider the $r$-regular partitions and the generating function of $Y_{r,j,n}$.
We put 
\begin{equation}\label{phj}
\Phi_{r,j}(q,t)=\prod_{k \geq 1} (1+q^k+q^{2k}+\cdots+q^{(j-1)k}+tq^{jk}+tq^{(j+1)k}+\cdots+tq^{(r-1)k}).
\end{equation}
Immediately we have
\begin{align*}
\Phi_{r,j}(q,t)&=\sum_{n \geq 0}\left(\sum_{\lambda \in RP_{r,n}}t^{y_{r,j}(\lambda)}\right)q^n.
\end{align*}  
Taking the $t$-derivative at $t=1$, we obtain
$$\frac{d}{dt}\Phi_{r,j}(q,t)\Big|_{t=1}=\sum_{n \geq 0}
\left(
\sum_{\lambda \in RP_{r,n}}{y_{r,j}(\lambda)}
\right)
q^n
=\sum_{n \geq 0} 
Y_{r,j,n}
q^n.
$$
As for the equation (\ref{phj}), we have
$$\frac{d}{dt}\Phi_{r,j}(q,t)\Big|_{t=1}=
\Phi_{r,j}(q,t)\sum_{k \geq 1}\frac{q^{jk}-q^{rk}}{1-q^{rk}}.$$
The generating function 
 of $Y_{r,j,n}
$ reads
$$
\sum_{n \geq 0}
Y_{r,j,n}
q^n
=
\Phi_{r}(q)\sum_{k \geq 1}\frac{q^{jk}-q^{rk}}{1-q^{rk}}.$$
To complete the proof, we compute
\begin{align*}
\sum_{n \geq 0}
X_{r,j,n}
q^n-\sum_{n \geq 0}
Y_{r,j,n}
q^n&=\Phi_{r}(q)\left(\sum_{k \geq 0}(q^{rk+j}+q^{2(rk+j)}+q^{3(rk+j)}+\cdots) \right.\\
&-\left.\sum_{m \geq 1}(q^{jm}+q^{(r+j)m}+q^{(2r+j)m}+\cdots)+\sum_{k\geq1}\frac{q^{rk}}{1-q^{rk}}\right)\\
&=\Phi_{r}(q)\left(\sum_{k \geq 0}\sum_{m\geq 1}q^{m(rk+j)} 
-\sum_{m \geq 1}\sum_{k\geq 0}q^{(kr+j)m}+\sum_{k\geq 1}\frac{q^{rk}}{1-q^{rk}}\right)\\
&=\Phi_{r}(q)\sum_{k \geq 1}\frac{q^{rk}}{1-q^{rk}}=\sum_{n\geq 0}c_{r,n}q^n.
\end{align*}
\end{proof}
\subsection{Hall-Littlewood symmetric functions at root of unity}
Next, we apply Theorem \ref{main} to computations
 of some minor determinants of 
transition matrices and the character tables of the symmetric groups.
The Hall-Littlewood $P$- and $Q$- symmetric functions (\cite{mac}) are
 a one parameter family of symmetric functions 
satisfying the orthogonality relation:
$$\langle P_{\lambda}(x;t), Q_{\mu}(x;t)\rangle_{t}=\delta_{\lambda\mu},$$
where the inner product $\langle , \rangle_{t}$ is defined by $\langle p_{\lambda}(x) ,
p_{\mu}(x) \rangle_{t}=z_{\lambda}(t)\delta_{\lambda\mu}$ 
with $z_{\lambda}(t)=z_{\lambda}\prod_{i \geq 1}(1-t^{\lambda_{i}})^{-1}$.
Let $(a;t)_{n}$ be a $t$-shifted factorial:
 $$(a;t)_{n}=
 \begin{cases}
  (1-a)(1-at)\cdots(1-at^{n-1})&(n \geq 1)\\
  1 & (n=0).
 \end{cases}$$
The relation between $P$- and $Q$- functions is described as
$$Q_{\lambda}(x)=b_{\lambda}(t)P_{\lambda}(x),$$
where $b_{\lambda}(t)=\prod_{i \geq 1}(t;t)_{m_{i}(\lambda)}$.
\subsection{$Q'$-functions}
We are interested in the case that parameter $t$ is a primitive $r$-th root of unity $\zeta$.
The Hall-Littlewood symmetric functions at root of unity is studied 
at the first time by \cite{mo}.
We remark that $\{Q_{\lambda}(x;\zeta)\mid \lambda \in RP_{r,n}\}$
 is a ${\mathbb Q}(\zeta)$-basis for the subspace $\Lambda^{(r)}={\mathbb Q}(\zeta)[p_{s}(x)\mid s\not\equiv 0 \pmod{r}]$ 
of the symmetric function ring $\Lambda={{\mathbb Q}(\zeta)}[p_{s}(x)\mid s=1,2,\ldots]$.
This can be shown along the arguments in \cite[Chap. 3-8]{mac},
where the case $r=2$ is discussed.
In \cite{llt}, Lascoux, Leclerc and Thibon consider the dual basis $(Q'_{\lambda})$ of $P$-functions, relative 
to the inner product at $t=0$. Namely $P$- and $Q'$- functions satisfy the  
Cauchy identity:
$$\sum_{\lambda}P_{\lambda}(x;t)Q'_{\lambda}(y;t)=\prod_{i,j}(1-x_{i}y_{j})^{-1}.$$
When $t=\zeta$, the $Q'$-functions have the following nice factorization property.
\begin{prop}[\cite{llt}]\label{facQ}
Let $\zeta$ be a primitive $r$-th root of unity.
If a partition $\lambda$ satisfies $m_{i}(\lambda) \geq r$, then we have
$$Q'_{\lambda}(x;\zeta)=(-1)^{i(r-1)}Q'_{\lambda \setminus (i^r)}(x;\zeta)h_{i}(x^r).$$
Here $h_{i}(x^r)=h_{i}(x_{1}^r,x_{2}^r,\ldots)$ and $\lambda \setminus (i^r)$ is 
a partition obtained by removing the rectangle $(r^i)$  from the Young diagram $\lambda$.
\end{prop}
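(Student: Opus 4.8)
The plan is to realize the $Q'$-functions through Bernstein-type vertex operators and to read the factorization off the degeneration of their commutation relation at $t=\zeta$. First I would record a creation-operator description of the $Q'$-basis. Writing $H(z)=\exp\big(\sum_{n\ge1}\tfrac{1}{n}p_n z^n\big)$ for multiplication by the generating series $\sum_m h_m z^m$, and $\Gamma(z)=\exp\big(-\sum_{n\ge1}\tfrac{1-t^n}{n}p_n^{\perp}z^{-n}\big)$ for the dual (skewing) part, set $\mathsf B(z)=H(z)\Gamma(z)=\sum_m \mathsf B_m z^m$. These $\mathsf B_m$ are Jing's Hall--Littlewood vertex operators transported to the $t=0$ inner product $\langle\cdot,\cdot\rangle_{0}$; one checks $\mathsf B_m(1)=h_m=Q'_{(m)}$ and, more generally, $Q'_\lambda=\mathsf B_{\lambda_1}\cdots\mathsf B_{\lambda_\ell}(1)$ for every partition $\lambda$.

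The computational heart is a single Heisenberg-algebra calculation. From $[p_n^{\perp},p_m]=n\,\delta_{nm}$ I would derive the normal-ordering relation
$$\mathsf B(z)\mathsf B(w)=\frac{1-w/z}{1-t\,w/z}\,H(z)H(w)\,\Gamma(z)\Gamma(w),$$
and hence, since each $\Gamma(z_a)$ fixes $1$,
$$\mathsf B(z_1)\cdots\mathsf B(z_r)(1)=\prod_{1\le a<b\le r}\frac{1-z_b/z_a}{1-t\,z_b/z_a}\;\prod_{a=1}^{r}H(z_a)(1).$$
Specializing $t=\zeta$ and putting $z_a=\zeta^{a-1}z$, the $H$-part collapses by the root-of-unity symmetry $\prod_a H(\zeta^{a-1}z)=\exp\big(\sum_{k\ge1}\tfrac{1}{k}p_{rk}z^{rk}\big)=\sum_j h_j(x^r)z^{rj}$, so extracting the coefficient of $z^{ri}$ isolates exactly the plethystic factor $h_i(x^r)=h_i[p_r]$. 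This yields the rectangle case $Q'_{(i^r)}(x;\zeta)=(-1)^{i(r-1)}h_i(x^r)$; a quick check when $r=2$ confirms the signs, since $Q'_{(1,1)}(x;-1)=-p_2$ and $Q'_{(2,2)}(x;-1)=\tfrac12(p_2^2+p_4)$, matching $(-1)^{1}$ and $(-1)^{2}$.

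Finally, for general $\lambda$ with $m_i(\lambda)\ge r$ the parts equal to $i$ are consecutive, so I would write $Q'_\lambda=\mathsf B_{\alpha}\,\mathsf B_i^{\,r}\,\mathsf B_{\beta}(1)$, where $\alpha$ collects the parts $>i$ and $\beta$ the parts $<i$, and use the commutation relation to move the block $\mathsf B_i^{\,r}$ so that at $t=\zeta$ it acts as multiplication by $(-1)^{i(r-1)}h_i(x^r)$; since this factor is the plethysm $h_i[p_r]$, a polynomial in the $p_{rk}$, it multiplies cleanly onto $Q'_{\lambda\setminus(i^r)}(x;\zeta)$, giving the stated identity.

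The main obstacle is the root-of-unity degeneration in the middle step. Along the diagonal specialization $z_a=\zeta^{a-1}z$ the commutation prefactor $\prod_{a<b}\frac{1-z_b/z_a}{1-\zeta z_b/z_a}$ acquires a genuine pole (the factor with $b-a=r-1$ has vanishing denominator), reflecting the fact that setting $t=\zeta$ and extracting the coefficient do not commute; one must show this pole cancels against the zeros produced by the antisymmetry of the coefficient extraction, leaving the finite scalar $(-1)^{i(r-1)}$. Equivalently, the rigorous route keeps $t$ generic, extracts the coefficient as a Laurent polynomial in $t$ times symmetric functions, and only then evaluates at $\zeta$, where the ``straightening'' of the non-dominant diagonal terms produces the collapse. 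I expect this residue/limit bookkeeping, rather than the algebra of the vertex operators, to be the delicate point, and the same care is what legitimizes the block-commutation in the last step.
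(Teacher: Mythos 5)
The paper offers no proof of this proposition: it is quoted from Lascoux--Leclerc--Thibon \cite{llt}, so there is nothing in-paper to compare your argument against, and it must be judged on its own. Your overall strategy is a legitimate and recognized route: the operator $\mathsf B(z)=H(z)\Gamma(z)$ does create the $Q'$-basis, the normal-ordering relation $\Gamma(z)H(w)=\frac{1-w/z}{1-tw/z}H(w)\Gamma(z)$ follows correctly from $[p_n^{\perp},p_m]=n\delta_{nm}$, the collapse $\prod_{a=1}^{r}H(\zeta^{a-1}z)=\sum_{j\geq 0}h_j(x^r)z^{rj}$ is right, and your $r=2$ sign checks are accurate.

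However, the two steps you yourself flag as delicate are exactly where the content of the proposition lives, and neither is closed. First, in the rectangle case the specialization $z_a=\zeta^{a-1}z$ at $t=\zeta$ is genuinely undefined: the pair $(a,b)=(1,r)$ contributes the factor $(1-\zeta^{r})^{-1}=(1-1)^{-1}$, while no numerator vanishes to compensate, so the displayed product is not a regularizable $0/0$ but an honest pole. Moreover, even after a correct regularization (keeping $t$ generic, specializing, then letting $t\to\zeta$), the coefficient of $z^{ri}$ in the specialized series is a $\zeta$-weighted sum of $\mathsf B_{m_1}\cdots\mathsf B_{m_r}(1)$ over \emph{all} compositions $m_1+\cdots+m_r=ri$, not the single term $\mathsf B_i^{\,r}(1)=Q'_{(i^r)}$; converting that sum into $Q'_{(i^r)}$ requires the quadratic straightening relations for the modes $\mathsf B_m$, so the constant $(-1)^{i(r-1)}$ is asserted rather than derived. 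Second, the reduction of general $\lambda$ to the rectangle case is not a formality: $\mathsf B_i^{\,r}$ is not the operator of multiplication by $\pm h_i(x^r)$ (only its value on $1$ is), and commuting the block past $\mathsf B_{\alpha_1},\ldots$ generates correction terms indexed by non-dominant sequences which must be shown to vanish or reassemble at $t=\zeta$. To make this airtight you would either have to carry out that operator computation, or sidestep it, e.g.\ via the plethystic relation between $Q'_\lambda(x;t)$ and $Q_\lambda(x;t)$ together with the known factorization of Hall--Littlewood functions at roots of unity going back to Morris \cite{mo} and \cite{llt}.
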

We define an $r$-{\it reduction} for a symmetric function $f(x)$ by
$$f^{(r)}(x)=f(x)\big|_{p_{r}(x)=p_{2r}(x)=p_{3r}(x)=\ldots=0}.$$
Proposition \ref{facQ} leads us to  the following lemma.
\begin{lem}\label{van}
${Q'}^{(r)}_{\lambda}(x;\zeta)=0$ unless $\lambda$ is an $r$-regular partition.
\end{lem}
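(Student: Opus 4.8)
The plan is to argue by contraposition: assuming $\lambda$ is not $r$-regular, I will show that the $r$-reduction annihilates $Q'_{\lambda}(x;\zeta)$. If $\lambda$ is not $r$-regular, then by definition some part $i \geq 1$ occurs with multiplicity $m_i(\lambda) \geq r$, so Proposition \ref{facQ} applies and gives the factorization $Q'_{\lambda}(x;\zeta)=(-1)^{i(r-1)}Q'_{\lambda \setminus (i^r)}(x;\zeta)\,h_{i}(x^r)$. The strategy is then to apply the $r$-reduction to both sides and to observe that the factor $h_{i}(x^r)$ becomes $0$, forcing the whole product to vanish.

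The heart of the matter is the behavior of $h_{i}(x^r)$ under the $r$-reduction. First I would record the plethystic identity $p_{k}(x^r)=\sum_{j}(x_{j}^r)^k=\sum_{j}x_{j}^{rk}=p_{rk}(x)$, so that the power sums of the variables $x^r$ are precisely the power sums of $x$ indexed by multiples of $r$. Expanding the complete homogeneous function in the power-sum basis, $h_{i}=\sum_{\mu \vdash i}z_{\mu}^{-1}p_{\mu}$, and substituting $x\mapsto x^r$ yields $h_{i}(x^r)=\sum_{\mu\vdash i}z_{\mu}^{-1}\prod_{k}p_{r\mu_{k}}(x)$. Because $i \geq 1$, every partition $\mu$ of $i$ is nonempty, hence every monomial appearing here contains at least one factor $p_{rk}(x)$ with $k \geq 1$; in particular $h_{i}(x^r)$ lies in the ideal generated by $\{p_{r},p_{2r},p_{3r},\ldots\}$ and has no constant term.

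Now I would invoke that the $r$-reduction $f \mapsto f^{(r)}$ is a well-defined algebra homomorphism: since the power sums $p_{1},p_{2},\ldots$ are algebraically independent generators of $\Lambda$ over ${\mathbb Q}(\zeta)$, setting $p_{r}=p_{2r}=\cdots=0$ extends uniquely to a ring homomorphism. Consequently the reduction respects the product in the factorization above, giving ${Q'}^{(r)}_{\lambda}(x;\zeta)=(-1)^{i(r-1)}{Q'}^{(r)}_{\lambda\setminus(i^r)}(x;\zeta)\,h_{i}(x^r)^{(r)}$, and by the previous paragraph $h_{i}(x^r)^{(r)}=0$. Therefore ${Q'}^{(r)}_{\lambda}(x;\zeta)=0$, which is exactly the desired claim.

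I would expect the only genuinely delicate point to be the verification that the $r$-reduction is multiplicative, i.e.\ that it is legitimate to apply it factor-by-factor to the right-hand side of Proposition \ref{facQ}; once the algebraic independence of the power sums is used to make this precise, the remaining evaluation $h_{i}(x^r)^{(r)}=0$ is immediate from the plethystic identity $p_{k}(x^r)=p_{rk}(x)$ together with the absence of a constant term in $h_{i}$ for $i\geq 1$.
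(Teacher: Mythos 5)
Your proof is correct and follows exactly the route the paper intends: the paper offers no written proof beyond the remark that Proposition~\ref{facQ} leads to the lemma, and your argument---apply the factorization once to a repeated part, expand $h_{i}(x^{r})$ in the power sums $p_{rk}(x)$, and use that the $r$-reduction is a ring homomorphism killing that factor---is precisely the intended deduction, spelled out carefully.
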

We set
$$
Q_{\lambda}(x;\zeta)=\sum_{\rho \in CP_{r,n}}Q_{\rho}^{\lambda}p_{\rho}(x)
\ \ {\rm and}\ \  
{Q'}^{(r)}_{\lambda}(x;\zeta)=\sum_{\rho \in CP_{r,n}}{Q'}_{\rho}^{\lambda}p_{\rho}(x).
$$ 
\begin{prop}\label{QQ}
Let $\lambda \in RP_{r,n}$ and $\rho \in CP_{r,n}$. We have
$${Q'}_{\rho}^{\lambda}=\prod_{i \geq 1}(1-\zeta^{\rho_{i}})^{-1}Q_{\rho}^{\lambda}.$$
\end{prop}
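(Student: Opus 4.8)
The plan is to prove the coefficient identity first at a \emph{generic} parameter $t$, as an identity of rational functions, and only at the end specialize to $t=\zeta$; the single delicate point is the passage to the root of unity. The starting observation is that the two families are dual bases for two different inner products. On the one hand, the relation $\langle P_{\lambda}(x;t),Q_{\mu}(x;t)\rangle_{t}=\delta_{\lambda\mu}$ says that $\{Q_{\mu}\}$ is the dual basis of $\{P_{\lambda}\}$ with respect to $\langle\cdot,\cdot\rangle_{t}$. On the other hand, the Cauchy identity $\sum_{\lambda}P_{\lambda}(x;t)Q'_{\lambda}(y;t)=\prod_{i,j}(1-x_{i}y_{j})^{-1}=\sum_{\nu}z_{\nu}^{-1}p_{\nu}(x)p_{\nu}(y)$, via the standard adjoint-bases criterion, exhibits $\{Q'_{\mu}\}$ as the dual basis of $\{P_{\lambda}\}$ with respect to the $t=0$ (ordinary Hall) inner product $\langle\cdot,\cdot\rangle_{0}$, for which $\langle p_{\rho},p_{\sigma}\rangle_{0}=z_{\rho}\delta_{\rho\sigma}$.

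First I would relate the two products. Writing $Q_{\mu}=\sum_{\rho}Q^{\mu}_{\rho}(t)\,p_{\rho}$ and $Q'_{\mu}=\sum_{\rho}{Q'}^{\mu}_{\rho}(t)\,p_{\rho}$ (sums over all $\rho\vdash n$, with coefficients in $\mathbb{Q}(t)$), the definition $z_{\rho}(t)=z_{\rho}\prod_{i\geq1}(1-t^{\rho_{i}})^{-1}$ shows that the diagonal operator $\Theta_{t}\colon p_{\rho}\mapsto\prod_{i\geq1}(1-t^{\rho_{i}})^{-1}p_{\rho}$ satisfies $\langle f,g\rangle_{t}=\langle f,\Theta_{t}g\rangle_{0}$ for all $f,g$. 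Hence $\langle P_{\lambda},\Theta_{t}Q_{\mu}\rangle_{0}=\langle P_{\lambda},Q_{\mu}\rangle_{t}=\delta_{\lambda\mu}$, so $\Theta_{t}Q_{\mu}$ is also the $\langle\cdot,\cdot\rangle_{0}$-dual basis to $\{P_{\lambda}\}$. Since the dual basis is unique in the finite-dimensional space $\Lambda_{n}$, we conclude $Q'_{\mu}=\Theta_{t}Q_{\mu}$, that is
$$
{Q'}^{\lambda}_{\rho}(t)=\prod_{i\geq1}(1-t^{\rho_{i}})^{-1}\,Q^{\lambda}_{\rho}(t)
\qquad\text{for all }\rho\vdash n,
$$
as an identity of rational functions of $t$.

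The hard part will be specializing at $t=\zeta$, because $z_{\rho}(t)$ acquires a pole whenever $\rho$ has a part divisible by $r$, so one cannot naively set $t=\zeta$ in the inner products. I would circumvent this by working with the power-sum coefficients directly: the functions $Q_{\lambda}(x;t)$ and $Q'_{\lambda}(x;t)$ have coefficients in $\mathbb{Z}[t]$ in the monomial and Schur bases respectively (the latter being Kostka--Foulkes polynomials), and the transition to the power-sum basis is over $\mathbb{Q}$; hence $Q^{\lambda}_{\rho}(t)$ and ${Q'}^{\lambda}_{\rho}(t)$ are \emph{polynomials} in $t$. Clearing the polynomial factor gives ${Q'}^{\lambda}_{\rho}(t)\prod_{i}(1-t^{\rho_{i}})=Q^{\lambda}_{\rho}(t)$ as a polynomial identity, valid at every value of $t$, in particular at $t=\zeta$. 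For $\rho\in CP_{r,n}$ one has $\rho_{i}\not\equiv0\pmod{r}$, so $\prod_{i}(1-\zeta^{\rho_{i}})\neq0$ and we may divide. It remains to match the specialized coefficients with those in the statement: for $\lambda\in RP_{r,n}$ we have $Q_{\lambda}(x;\zeta)\in\Lambda^{(r)}$, so $Q^{\lambda}_{\rho}(\zeta)$ is precisely the coefficient $Q^{\lambda}_{\rho}$; and since the $r$-reduction only deletes the power sums $p_{\sigma}$ having a part divisible by $r$, the coefficient of $p_{\rho}$ in ${Q'}^{(r)}_{\lambda}(x;\zeta)$ equals ${Q'}^{\lambda}_{\rho}(\zeta)$ for $\rho\in CP_{r,n}$, i.e. the coefficient ${Q'}^{\lambda}_{\rho}$. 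This yields ${Q'}^{\lambda}_{\rho}=\prod_{i\geq1}(1-\zeta^{\rho_{i}})^{-1}Q^{\lambda}_{\rho}$, as claimed.
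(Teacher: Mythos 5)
Your proof is correct, but it takes a genuinely different route from the paper's. The paper never leaves $t=\zeta$: it restricts everything to the subspace $\Lambda^{(r)}$, writes the two orthogonality relations $\langle P_{\lambda}(x;\zeta),Q_{\mu}(x;\zeta)\rangle_{\zeta}=\delta_{\lambda\mu}$ and $\langle P_{\lambda}(x;\zeta),{Q'}^{(r)}_{\mu}(x;\zeta)\rangle_{0}=\delta_{\lambda\mu}$ as two matrix identities indexed by $\rho\in CP_{r,n}$, namely $b_{\lambda}(\zeta)^{-1}\sum_{\rho}Q^{\lambda}_{\rho}Q^{\mu}_{\rho}z_{\rho}(\zeta)=b_{\lambda}(\zeta)^{-1}\sum_{\rho}Q^{\lambda}_{\rho}{Q'}^{\mu}_{\rho}z_{\rho}$, and cancels the matrix $(Q^{\lambda}_{\rho})$ --- invertible because $\{Q_{\lambda}(x;\zeta)\mid\lambda\in RP_{r,n}\}$ is a basis of $\Lambda^{(r)}$ --- to read off $z_{\rho}(\zeta)Q^{\mu}_{\rho}=z_{\rho}{Q'}^{\mu}_{\rho}$. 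You instead prove the stronger generic-$t$ identity $Q'_{\mu}=\Theta_{t}Q_{\mu}$ (the substitution $p_{k}\mapsto p_{k}/(1-t^{k})$) by uniqueness of the $\langle\cdot,\cdot\rangle_{0}$-dual basis of $\{P_{\lambda}\}$ in all of $\Lambda_{n}$, and only then specialize. What your route buys: at $t=\zeta$ you need only polynomiality of the power-sum coefficients and the nonvanishing of $\prod_{i}(1-\zeta^{\rho_{i}})$ for $\rho\in CP_{r,n}$; you do not need the basis property of $\{Q_{\lambda}(x;\zeta)\}$ over $\Lambda^{(r)}$, and you get the identity for all $\rho\vdash n$, not just class-regular ones. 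What it costs is exactly the delicate point you flagged --- the poles of $z_{\rho}(t)$ at $t=\zeta$ --- and your polynomiality argument (coefficients in $\mathbb{Z}[t]$ in the monomial, resp.\ Schur, basis, hence in $\mathbb{Q}[t]$ in the power-sum basis, so the cleared identity is a polynomial identity) handles it correctly. It is worth noting that the paper's appeal to $\langle P_{\lambda}(x;\zeta),{Q'}^{(r)}_{\mu}(x;\zeta)\rangle_{0}=\delta_{\lambda\mu}$ itself presupposes that the generic-$t$ Cauchy duality survives the specialization $t=\zeta$, so a version of your polynomiality observation is implicitly present in the paper's proof as well.
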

\begin{proof}
We compute  inner products at $t=\zeta$ and $t=0$
for $r$-regular partitions $\lambda$ and $\mu$.
Namely, we see
\begin{align*}\delta_{\lambda \mu}=\langle P_{\lambda}(x;\zeta)
,Q_{\mu}(x;\zeta)\rangle_{\zeta}
=b_{\lambda}(\zeta)^{-1}\sum_{\rho\in CP_{r,n}}Q_{\rho}^{\lambda}Q_{\rho}^{\mu}z_{\rho}(\zeta)
\end{align*}
and
\begin{align*}\delta_{\lambda \mu}=\langle P_{\lambda}(x;\zeta)
,{Q'}^{(r)}_{\mu}(x;\zeta)\rangle_{0}
=b_{\lambda}(\zeta)^{-1}\sum_{\rho\in CP_{r,n}}Q_{\rho}^{\lambda}{Q'}_{\rho}^{\mu}z_{\rho}.
\end{align*}
Since $\{P_{\lambda}(x;\zeta) \mid \lambda \in RP_{r,n}\}$ is also a basis of $\Lambda^{(r)}$, we have the 
claim.
\end{proof}
We define $L_{\lambda\mu}(t)$ by
$$s_{\lambda}(x)=\sum_{\mu \in P_{n} }L_{\lambda\mu}(t)Q'_{\mu}(x;t),$$
where $s_{\lambda}(x)$ denotes the Schur function.
Let  $K_{\lambda\mu}(t)$ be the Kostka-Foulkes polynomial (\cite{mac}).
In other words, the matrix $K(t)=(K_{\lambda\mu}(t))_{\lambda,\mu \in P_{n}}$ is the transition matrix $M(s,P)$ from 
the Schur functions to the Hall-Littelewood $P$-functions. It is known that $K(t)$ is an upper unitriangular 
matrix.
\begin{lem}\label{kos} For  partitions $\lambda$ and $\mu$, we have 
$L_{\lambda\mu}(t)=K_{\mu\lambda}^{(-1)}(t)$, 
 the $(\lambda,\mu)$-entry  of the matrix $K(t)^{-1}$. 
\end{lem}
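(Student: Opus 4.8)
The plan is to deduce the lemma purely from duality: the Cauchy identity makes $\{P_{\lambda}(x;t)\}$ and $\{Q'_{\mu}(x;t)\}$ dual bases with respect to the inner product $\langle\cdot,\cdot\rangle_{0}$ at $t=0$, while the Schur functions are orthonormal for the same inner product. Pairing the two expansions of $s_{\lambda}$ against this duality will force $L(t)$ to be the transpose inverse of $K(t)$.

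First I would record the duality. The Cauchy identity $\sum_{\lambda}P_{\lambda}(x;t)Q'_{\lambda}(y;t)=\prod_{i,j}(1-x_{i}y_{j})^{-1}$ is, by the standard reproducing-kernel criterion (using $\prod_{i,j}(1-x_{i}y_{j})^{-1}=\sum_{\rho}z_{\rho}^{-1}p_{\rho}(x)p_{\rho}(y)$ together with $\langle p_{\rho},p_{\sigma}\rangle_{0}=z_{\rho}\delta_{\rho\sigma}$), equivalent to the statement
$$\langle P_{\lambda}(x;t),Q'_{\mu}(x;t)\rangle_{0}=\delta_{\lambda\mu}.$$
I would also note that, since $\langle\cdot,\cdot\rangle_{0}$ is the ordinary Hall inner product, the Schur functions satisfy $\langle s_{\lambda},s_{\mu}\rangle_{0}=\delta_{\lambda\mu}$.

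Next I would evaluate the single quantity $\langle s_{\lambda},P_{\nu}\rangle_{0}$ in two ways. Expanding the first argument by the defining relation $s_{\lambda}=\sum_{\mu}L_{\lambda\mu}(t)Q'_{\mu}$ and applying the duality gives
$$\langle s_{\lambda},P_{\nu}\rangle_{0}=\sum_{\mu}L_{\lambda\mu}(t)\langle Q'_{\mu},P_{\nu}\rangle_{0}=L_{\lambda\nu}(t).$$
On the other hand, inverting the relation $s_{\lambda}=\sum_{\nu}K_{\lambda\nu}(t)P_{\nu}$ that defines $K(t)$ yields $P_{\nu}=\sum_{\rho}K^{(-1)}_{\nu\rho}(t)\,s_{\rho}$, so orthonormality of the Schur functions gives
$$\langle s_{\lambda},P_{\nu}\rangle_{0}=\sum_{\rho}K^{(-1)}_{\nu\rho}(t)\langle s_{\lambda},s_{\rho}\rangle_{0}=K^{(-1)}_{\nu\lambda}(t).$$
Equating the two evaluations and renaming $\nu$ as $\mu$ gives exactly $L_{\lambda\mu}(t)=K^{(-1)}_{\mu\lambda}(t)$.

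The computation is short; the only real point to watch is bookkeeping. I must ensure that the pairing underlying the $P$--$Q'$ Cauchy identity is the $t=0$ inner product $\langle\cdot,\cdot\rangle_{0}$ (the one for which the Schur functions are orthonormal), and not the $t$-deformed pairing $\langle\cdot,\cdot\rangle_{t}$ governing the $P$--$Q$ duality used earlier. I must also keep the index order straight, so that the transpose genuinely appears, i.e. $L(t)=(K(t)^{-1})^{\mathsf{T}}$ rather than $K(t)^{-1}$ itself. Both are routine once the duality statement is fixed, so I do not anticipate a substantive obstacle beyond this care with conventions.
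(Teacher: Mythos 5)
Your proposal is correct and is essentially the paper's own proof: the paper likewise writes $L_{\lambda\mu}(t)=\langle s_{\lambda},P_{\mu}\rangle_{0}$ (which is your first evaluation, via the $P$--$Q'$ duality under $\langle\cdot,\cdot\rangle_{0}$) and then expands $P_{\mu}=\sum_{\nu}K^{(-1)}_{\mu\nu}(t)s_{\nu}$ against the orthonormal Schur basis to get $K^{(-1)}_{\mu\lambda}(t)$. You merely spell out the reproducing-kernel justification of the duality, which the paper leaves implicit.
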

\begin{proof}
$$L_{\lambda\mu}(t)=\langle s_{\lambda}(x),P_{\mu}(x;t)\rangle_{0}=\langle s_{\lambda}(x),
\sum_{\nu\in P_{n}} K_{\mu\nu}^{(-1)}s_{\nu}\rangle_{0}=K_{\mu\lambda}^{(-1)}(t).$$
\end{proof}
\begin{ex}
We take $\zeta=-1$ and $n=4$. Then we have
\begin{align*}
s_{4}(x)&={Q'}_{4}(x;-1),\\
s_{31}(x)&={Q'}_{31}(x;-1)+{Q'}_{4}(x;-1),\\
s_{22}(x)&={Q'}_{22}(x;-1)+{Q'}_{31}(x;-1),\\
 s_{211}(x)&={Q'}_{211}(x;-1)+{Q'}_{22}(x;-1)+{Q'}_{31}(x;-1)+{Q'}_{4}(x;-1),\\
s_{1111}(x)&={Q'}_{1111}(x;-1)+{Q'}_{211}(x;-1)-{Q'}_{22}(x;-1)+{Q'}_{4}(x;-1).
\end{align*}
\end{ex}
Lemma \ref{van} and \ref{kos} give the following expansion formula.
\begin{prop}\label{lt} 
Let $\lambda \in P_{n}$  and $\mu \in RP_{r,n}$. We have
$$s^{(r)}_{\lambda}(x)=\sum_{\mu \in RP_{r,n} }K^{(-1)}_{\mu\lambda}(\zeta){Q'}^{(r)}_{\mu}(x;\zeta).$$ 
In particular, $(L_{\lambda\mu}(\zeta))_{\lambda,\mu \in RP_{r,n}}$ is a lower unitriangular matrix.
\end{prop}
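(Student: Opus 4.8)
The plan is to start from the defining relation $s_{\lambda}(x)=\sum_{\mu \in P_{n}}L_{\lambda\mu}(t)Q'_{\mu}(x;t)$, specialize $t=\zeta$, and then apply the $r$-reduction operator $f \mapsto f^{(r)}$ to both sides. The $r$-reduction is linear, so it passes through the sum, yielding
\begin{equation*}
s^{(r)}_{\lambda}(x)=\sum_{\mu \in P_{n}}L_{\lambda\mu}(\zeta)\,{Q'}^{(r)}_{\mu}(x;\zeta).
\end{equation*}
The decisive step is then to invoke Lemma \ref{van}, which says ${Q'}^{(r)}_{\mu}(x;\zeta)=0$ whenever $\mu$ is \emph{not} $r$-regular. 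This annihilates every term indexed by $\mu \notin RP_{r,n}$, so the sum collapses to a sum over $\mu \in RP_{r,n}$ only. Substituting the identification $L_{\lambda\mu}(\zeta)=K^{(-1)}_{\mu\lambda}(\zeta)$ from Lemma \ref{kos} gives precisely the asserted formula
\begin{equation*}
s^{(r)}_{\lambda}(x)=\sum_{\mu \in RP_{r,n}}K^{(-1)}_{\mu\lambda}(\zeta)\,{Q'}^{(r)}_{\mu}(x;\zeta).
\end{equation*}

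For the ``in particular'' clause, I would argue as follows. Recall that $K(t)$ is upper unitriangular with respect to the dominance order on $P_{n}$, hence so is its inverse $K(t)^{-1}=(K^{(-1)}_{\mu\lambda}(t))$; specializing at $t=\zeta$ preserves this triangularity since the diagonal entries are the constant $1$. The matrix in question is $(L_{\lambda\mu}(\zeta))_{\lambda,\mu \in RP_{r,n}}$, which by Lemma \ref{kos} is the transpose-indexed restriction $(K^{(-1)}_{\mu\lambda}(\zeta))$ cut down to rows and columns indexed by $RP_{r,n}$. Because transposing an upper unitriangular matrix produces a lower unitriangular one, and restricting to a subset of indices (here $RP_{r,n} \subseteq P_{n}$) preserves uni-triangularity provided dominance order restricts consistently, the restricted matrix $(L_{\lambda\mu}(\zeta))_{\lambda,\mu \in RP_{r,n}}$ is lower unitriangular.

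The main obstacle I anticipate is the bookkeeping around the two index conventions: $L_{\lambda\mu}$ carries the Schur index first and the $Q'$ index second, whereas $K^{(-1)}_{\mu\lambda}$ has these swapped, so one must be careful that the triangularity claimed for $(L_{\lambda\mu}(\zeta))$ indeed corresponds to the \emph{lower} (not upper) unitriangular shape after the transposition implicit in Lemma \ref{kos}. A secondary subtlety is verifying that the restriction to $RP_{r,n}$ is compatible with dominance order, i.e.\ that the diagonal entries $L_{\lambda\lambda}(\zeta)=K^{(-1)}_{\lambda\lambda}(\zeta)=1$ survive and that no off-diagonal entry violating the lower-triangular pattern is introduced by the restriction; this follows from the fact that $RP_{r,n}$ inherits the dominance order from $P_{n}$ and unitriangularity is preserved under passing to any induced subposet. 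The rest of the argument is formal, resting entirely on the linearity of $r$-reduction and the vanishing Lemma \ref{van}.
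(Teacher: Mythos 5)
Your argument is correct and is exactly the paper's route: the paper derives Proposition \ref{lt} by applying the (linear, ring-homomorphism) $r$-reduction to the defining expansion $s_{\lambda}=\sum_{\mu}L_{\lambda\mu}(t)Q'_{\mu}(x;t)$ at $t=\zeta$, killing the non-$r$-regular terms via Lemma \ref{van}, and rewriting the coefficients via Lemma \ref{kos}. Your additional care about the transposition of indices and the fact that a principal submatrix indexed by $RP_{r,n}$ of a unitriangular matrix stays unitriangular only makes explicit what the paper leaves implicit.
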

\begin{ex}
By Proposition \ref{lt}, we immediately see
\begin{align*}
s^{(2)}_{4}(x)&={Q'}^{(2)}_{4}(x;-1),\\
s^{(2)}_{31}(x)&={Q'}^{(2)}_{31}(x;-1)+{Q'}^{(2)}_{4}(x;-1),\\
s^{(2)}_{22}(x)&={Q'}^{(2)}_{31}(x;-1),\\
 s^{(2)}_{211}(x)&={Q'}^{(2)}_{31}(x;-1)+{Q'}^{(2)}_{4}(x;-1),\\
s^{(2)}_{1111}(x)&={Q'}^{(2)}_{4}(x;-1).
\end{align*}
From the first two equations above, we have
$$(L_{\lambda\mu}(-1))_{\lambda,\mu \in RP_{2,4}}
=\begin{pmatrix}
1&0\\
1&1
\end{pmatrix}.$$
\end{ex}
We set 
$$s^{(r)}=\{s^{(r)}_{\lambda}(x)\mid \lambda \in RP_{r,n}\}, \ 
{Q'}^{(r)}=\{{Q'}^{(r)}_{\lambda}(x)\mid \lambda \in RP_{r,n}\}$$ and 
$$p^{(r)}=\{p_{\lambda}(x)\mid \lambda \in CP_{r,n}\}.$$
For $u,v \in \{s^{(r)},{Q'}^{(r)},p^{(r)}\}$,
we denote by $M(u,v)$ the transition matrix from $u$ to $v$.
By Lemma \ref{van}, we have that $M(s^{(r)},{Q'}^{(r)})$ is obtained by removing non $r$-regular rows and 
columns from the transposed inverse of $K(t)$.
\begin{thm}\label{Qp}
We have $\det M(Q'^{(r)},p^{(r)}) \in {\mathbb R}$ and
$$\det M(Q'^{(r)},p^{(r)})=\pm\frac{1}{r^{c_{r,n}}
{{\prod_{\rho \in CP_{r}(n)} \prod_{i \geq 1}\rho_{i}}}}.
$$
\end{thm}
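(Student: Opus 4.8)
The plan is to read off $M({Q'}^{(r)},p^{(r)})$ directly from the expansion ${Q'}^{(r)}_\lambda(x;\zeta)=\sum_{\rho\in CP_{r,n}}{Q'}^\lambda_\rho\,p_\rho(x)$, so that it is exactly the square matrix $\mathcal{Q}'=({Q'}^\lambda_\rho)_{\lambda\in RP_{r,n},\,\rho\in CP_{r,n}}$, and then to evaluate $\det\mathcal{Q}'$ through the orthogonality relations already used for Proposition \ref{QQ}. First I would set $\mathcal{Q}=(Q^\lambda_\rho)$ and record the matrix form of the inner product computation $\langle P_\lambda,{Q'}^{(r)}_\mu\rangle_0=\delta_{\lambda\mu}$, namely $\mathcal{Q}\,\mathrm{diag}(z_\rho)\,(\mathcal{Q}')^{T}=\mathrm{diag}(b_\lambda(\zeta))$. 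Proposition \ref{QQ} gives $\mathcal{Q}=\mathcal{Q}'\,\mathrm{diag}\big(\prod_i(1-\zeta^{\rho_i})\big)$, so writing $P_\zeta=\prod_{\rho\in CP_{r,n}}\prod_i(1-\zeta^{\rho_i})$ and taking determinants yields
$$P_\zeta\,(\det\mathcal{Q}')^{2}\,\prod_{\rho\in CP_{r,n}}z_\rho=\prod_{\lambda\in RP_{r,n}}b_\lambda(\zeta).$$
This isolates $(\det\mathcal{Q}')^{2}$ as a ratio of three explicit products, and the asserted membership $\det M\in\mathbb{R}$ will follow for free once this ratio is seen to be a positive rational.

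Next I would evaluate the two products not involving $\zeta$. For the denominator I would write $\prod_\rho z_\rho=\prod_{\rho\in CP_{r,n}}\prod_i i^{m_i(\rho)}\,m_i(\rho)!=a_{r,n}\,b_{r,n}$, and apply Theorem \ref{23} with $m=1$, namely $b_{r,n}=r^{c_{r,n}}a_{r,n}$, to obtain $\prod_\rho z_\rho=r^{c_{r,n}}a_{r,n}^{2}$. For the two root-of-unity products I would group factors by residue class modulo $r$: since every part of a $\rho\in CP_{r,n}$ is $\not\equiv 0$, collecting parts by residue yields $P_\zeta=\prod_{k=1}^{r-1}(1-\zeta^{k})^{X_{r,k,n}}$, and expanding $b_\lambda(\zeta)=\prod_i(\zeta;\zeta)_{m_i(\lambda)}$ by the same device yields $\prod_\lambda b_\lambda(\zeta)=\prod_{k=1}^{r-1}(1-\zeta^{k})^{Y_{r,k,n}}$, where $X_{r,k,n}=\sum_\rho x_{r,k}(\rho)$ and $Y_{r,k,n}=\sum_\lambda y_{r,k}(\lambda)$.

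The crux, and the step I expect to be the main obstacle to phrase cleanly, is the collapse of the ratio $\prod_\lambda b_\lambda(\zeta)/P_\zeta=\prod_{k=1}^{r-1}(1-\zeta^{k})^{\,Y_{r,k,n}-X_{r,k,n}}$. Here Theorem \ref{main} enters decisively: it gives $X_{r,k,n}-Y_{r,k,n}=c_{r,n}$ for every single $k=1,\dots,r-1$, so the exponent is the constant $-c_{r,n}$ and the ratio becomes $\big(\prod_{k=1}^{r-1}(1-\zeta^{k})\big)^{-c_{r,n}}$. Combined with the elementary evaluation $\prod_{k=1}^{r-1}(1-\zeta^{k})=r$ (set $x=1$ in $1+x+\cdots+x^{r-1}=\prod_{k=1}^{r-1}(x-\zeta^{k})$), this equals $r^{-c_{r,n}}$. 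Substituting the evaluations of this and the previous paragraph gives
$$(\det\mathcal{Q}')^{2}=\frac{\prod_{\lambda}b_\lambda(\zeta)}{P_\zeta\,\prod_{\rho}z_\rho}=\frac{r^{-c_{r,n}}}{r^{c_{r,n}}a_{r,n}^{2}}=\frac{1}{r^{2c_{r,n}}a_{r,n}^{2}},$$
a positive rational, so $\det M({Q'}^{(r)},p^{(r)})=\pm 1/(r^{c_{r,n}}a_{r,n})$ is real, as claimed. The only remaining care is bookkeeping: fixing one ordering of $RP_{r,n}$ and $CP_{r,n}$ throughout the determinantal manipulations, which leaves exactly the global sign undetermined.
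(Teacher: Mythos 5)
Your proposal is correct and follows essentially the same route as the paper: both combine the $t=0$ orthogonality of $P_\lambda$ and ${Q'}^{(r)}_\mu$ with Proposition \ref{QQ} to express $(\det M({Q'}^{(r)},p^{(r)}))^2$ as $\prod_\lambda b_\lambda(\zeta)\big/\bigl(P_\zeta\prod_\rho z_\rho\bigr)$, then collapse the root-of-unity factors via Theorem \ref{main}, the identity $\prod_{k=1}^{r-1}(1-\zeta^k)=r$, and Theorem \ref{23}. The only difference is presentational (you make the matrix factorization and the residue-class bookkeeping fully explicit, which is a useful clarification of the paper's somewhat compressed display).
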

\begin{proof}
The orthogonality relation of $P_{\lambda}(x;\zeta)$ and $Q'_{\mu}(x;\zeta)$:
\begin{align*}\delta_{\lambda \mu}=\langle P_{\lambda}(x;\zeta)
,{Q'}^{(r)}_{\mu}(x;\zeta)\rangle_{0}
=b_{\lambda}(\zeta)^{-1}\sum_{\rho\in CP_{r,n}}Q_{\rho}^{\lambda}{Q'}_{\rho}^{\mu}z_{\rho},
\end{align*}
and Proposition \ref{QQ} give 
\begin{align*}
\det M(Q'^{(r)},p^{(r)})^2&=\left(\prod_{\rho \in CP_{r,n}}\prod_{i \geq 1}\frac{1}{1-\zeta^{\rho_{i}}}
\right)^2
\frac{\prod_{\lambda \in RP_{r,n}}b_{\lambda}(\zeta)}{\prod_{\rho \in CP_{r,n}}z_{\rho}(\zeta)}\\
&=
\frac{\prod_{\lambda \in RP_{r,n}}b_{\lambda}(\zeta)}
{\prod_{\rho \in CP_{r,n}}z_{\rho}(\prod_{\rho \in CP_{r,n}}\prod_{i \geq 1}{(1-\zeta^{\rho_{i}}))}}\\
&=
\frac{\prod_{\lambda \in RP_{r,n}}\prod_{i \geq 1}(1-\zeta^{m_{i}(\lambda)})}
{\prod_{\rho \in CP_{r,n}}z_{\rho}(\prod_{\rho \in CP_{r,n}}\prod_{i \geq 1}{(1-\zeta^{\rho_{i}}))}}\\
&=
\frac{\prod_{\lambda \in RP_{r,n}}\prod_{j=1}^{r-1}(1-\zeta^{j})^{y_{r,j}(\lambda)}}
{\prod_{\rho \in CP_{r,n}}z_{\rho}(\prod_{\rho \in CP_{r,n}}\prod_{j = 1}^{r-1}{(1-\zeta^{j})^{x_{r,j}(\rho)})}}\\
&=
\frac{\prod_{j=1}^{r-1}(1-\zeta^{j})^{Y_{r,j,n}}}
{\prod_{\rho \in CP_{r,n}}z_{\rho}\prod_{j = 1}^{r-1}{(1-\zeta^{j})^{X_{r,j,n}}}}\\
&=
\frac{1}
{\prod_{\rho \in CP_{r,n}}z_{\rho}\prod_{j = 1}^{r-1}{(1-\zeta^{j})^{X_{r,j,n}-Y_{r,j,n}}}}\\
&=
\frac{1}
{\prod_{\rho \in CP_{r,n}}z_{\rho}\prod_{j = 1}^{r-1}{(1-\zeta^{j})^{c_{r,n}}}}.
\end{align*}
We apply Theorem \ref{main} to the last equality above.
By noticing $\prod_{i=1}^{r-1}(1-\zeta^{i})=r$ and using Theorem \ref{23}, we obtain the formula. 
\end{proof}
\subsection{Regular character tables of the symmetric groups}
Let $T_{n}=(\chi_{\rho}^{\lambda})_{\lambda,\rho \in P_{n}}$ be the ordinary character table of the symmetric group $S_{n}$.
The orthogonality relation of the characters implies
$$(\det T_{n})^2=\prod_{\rho \in P_{n}}z_{\rho} .$$
From James's book \cite[Corollary 6.5]{jam}, this formula can be simplified as
$$(\det T_{n})^2=\prod_{\rho \in P_{n}}\prod_{i \geq 1}\rho_{i}^2.$$
Olsson considers the $r$-regular character table  
$T^{(r)}_{n}=
(\chi_{\rho}^{\lambda})_{\substack{\lambda \in RP_{r,n}\\ 
\rho \in CP_{r,n}}}$ and computes its determinant. He proves the following theorem.
\begin{thm}[\cite{ol}]\label{osn1}
$$\det T^{(r)}_{n}=\pm \prod_{\rho \in CP_{r,n}}\prod_{i \geq 1}\rho_{i}.$$
\end{thm}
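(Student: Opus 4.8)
The plan is to exhibit the $r$-regular character table $T^{(r)}_n$ as a factor of the transition matrix $M(s^{(r)},p^{(r)})$ and then to read off its determinant from Theorem \ref{Qp}. I start from the Frobenius expansion of the Schur functions into power sums, $s_\lambda=\sum_{\rho\in P_n}z_\rho^{-1}\chi_\rho^\lambda p_\rho$. The $r$-reduction sets $p_{kr}=0$ and therefore annihilates precisely those $p_\rho$ having a part divisible by $r$, so that
$$s^{(r)}_\lambda(x)=\sum_{\rho\in CP_{r,n}}z_\rho^{-1}\chi_\rho^\lambda\,p_\rho(x).$$
Since $\{s^{(r)}_\lambda:\lambda\in RP_{r,n}\}$ is a basis of $\Lambda^{(r)}$ (Proposition \ref{lt}) and $\{p_\rho:\rho\in CP_{r,n}\}$ is a basis of the same space, the matrix $M(s^{(r)},p^{(r)})$ is the square matrix with $(\lambda,\rho)$-entry $z_\rho^{-1}\chi_\rho^\lambda$. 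Extracting $z_\rho^{-1}$ from each column gives
$$\det M(s^{(r)},p^{(r)})=\Big(\prod_{\rho\in CP_{r,n}}z_\rho^{-1}\Big)\det T^{(r)}_n.$$

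Next I would factor this transition matrix through the basis $Q'^{(r)}$, using multiplicativity of transition matrices,
$$M(s^{(r)},p^{(r)})=M(s^{(r)},Q'^{(r)})\,M(Q'^{(r)},p^{(r)}).$$
By Proposition \ref{lt} the first factor equals $(L_{\lambda\mu}(\zeta))_{\lambda,\mu\in RP_{r,n}}$, which is unitriangular and hence has determinant $\pm1$, while Theorem \ref{Qp} evaluates the second factor. Comparing with the previous display yields
$$\Big(\prod_{\rho\in CP_{r,n}}z_\rho^{-1}\Big)\det T^{(r)}_n=\pm\frac{1}{r^{c_{r,n}}\prod_{\rho\in CP_{r,n}}\prod_{i\geq1}\rho_i},$$
and therefore $\det T^{(r)}_n=\pm\big(\prod_{\rho\in CP_{r,n}}z_\rho\big)\big/\big(r^{c_{r,n}}\prod_{\rho\in CP_{r,n}}\prod_{i\geq1}\rho_i\big)$.

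Finally I would simplify the product of the $z_\rho$. Writing $z_\rho=\prod_{i\geq1}i^{m_i(\rho)}m_i(\rho)!$ and splitting the product over $CP_{r,n}$, the part-factors assemble into $a_{r,n}=\prod_{\rho}\prod_i\rho_i$ and the multiplicity-factorials into $b_{r,n}=\prod_{\rho}\prod_i m_i(\rho)!$, so that $\prod_{\rho}z_\rho=a_{r,n}b_{r,n}$. Theorem \ref{23} in the case $\underline{r}=r$ gives $b_{r,n}=r^{c_{r,n}}a_{r,n}$, hence $\prod_{\rho}z_\rho=r^{c_{r,n}}a_{r,n}^2$; substituting and cancelling $r^{c_{r,n}}$ and one factor $a_{r,n}$ leaves $\det T^{(r)}_n=\pm a_{r,n}=\pm\prod_{\rho\in CP_{r,n}}\prod_{i\geq1}\rho_i$. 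The step needing the most care is the identification in the first paragraph: one must be sure that the $r$-reduced character expansion really yields a \emph{square} invertible matrix, i.e. that $|RP_{r,n}|=|CP_{r,n}|$ and that both indexing families are bases of $\Lambda^{(r)}$; once squareness is secured, the factorization and the bookkeeping via Theorems \ref{Qp} and \ref{23} are routine.
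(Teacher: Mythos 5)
Your proposal is correct and follows essentially the same route as the paper: both factor $M(s^{(r)},p^{(r)})$ through the basis ${Q'}^{(r)}$, use the unitriangularity from Proposition \ref{lt} together with Theorem \ref{Qp}, identify $\det M(s^{(r)},p^{(r)})$ with $\bigl(\prod_{\rho}z_{\rho}^{-1}\bigr)\det T_{n}^{(r)}$ via the Frobenius expansion, and invoke Theorem \ref{23} to write $\prod_{\rho}z_{\rho}=r^{c_{r,n}}a_{r,n}^{2}$. The only cosmetic difference is that the paper carries out the computation with squared determinants while you track the sign $\pm$ directly.
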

\begin{proof}
Theorem \ref{Qp} and Proposition \ref{lt}
enable us to compute the determinant of the regular character table as follows:
\begin{align*}
\det M(s^{(r)},{p}^{(r)})^2&=\det M(s^{(r)},{Q'}^{(r)})^2\det M({Q'}^{(r)},p^{(r)})^2\\
&=1\times \frac{1}
{r^{2c_{r,n}}
{{\prod_{\rho \in CP_{r,n}} \prod_{i \geq 1}\rho_{i}^2}}}\end{align*}
Since 
\begin{align*}\det M(s^{(r)},{p}^{(r)})^2=(\det T_{n}^{(r)})^2 \times 
{{\prod_{\rho \in CP_{r,n}} z_{\rho}^{-2}}}
\mathop{=}_{\rm Thm \ref{23}}(\det T_{n}^{(r)})^2 \times r^{-2c_{r,n}}
{{\prod_{\rho \in CP_{r,n}} \prod_{i\geq1}\rho_{i}^{-4}}},
\end{align*}
we have
$$(\det T_{n}^{(r)})^2=\left(
\prod_{\rho \in CP_{r,n}} \prod_{i\geq1}\rho_{i}
\right)^2.$$
\end{proof}
%
%
%
%
%

%

\end{document}